\newtheorem{lemma}{Lemma}
\newtheorem{theorem}{Theorem}
\newtheorem{proposition}{Proposition}
\newtheorem{definition}{Definition}
\newcommand{\N}{\mathbb{N}}
\newcommand{\R}{\mathbb{R}}
\newcommand{\Z}{\mathbb{Z}}
\newcommand{\E}{\mathbb{E}}
\newcommand{\PP}{\mathbb{P}}
\newcommand{\1}{\mathbf{1}}
\newcommand{\cA}{\mathcal{A}}
\newcommand{\cB}{\mathcal{B}}
\newcommand{\cC}{\mathcal{C}}
\newcommand{\s}{\mathfrak{s}}
\newcommand{\Stab}{\mathrm{Stab}}
\newcommand{\rc}{\rho_c}
\newcommand{\rs}{\rho_\star}
\title{A new proof of superadditivity and of the density conjecture for Activated Random Walks on the line}
\author{Nicolas Forien\thanks{
  CEREMADE, CNRS, Université Paris-Dauphine, Université PSL,
75016 Paris, France}}
\begin{document}

\maketitle

\begin{abstract}
In two recent works, Hoffman, Johnson and Junge
proved the density conjecture, the hockey stick conjecture and the
ball conjecture for Activated Random Walks in
dimension one, showing an equality between several different
definitions of the critical density of the model.
This establishes a kind of self-organized criticality, which was
originally predicted for the Abelian Sandpile Model.

Their proof uses a comparison with a
percolation process, which exhibits
superadditivity.
We present here a different proof of these conjectures, based on a new
superadditivity property that we establish directly for Activated Random
Walks, without relying on a percolation process.

This more elementary approach yields less precise bounds than the
percolation
technology developed by Hoffman, 
Johnson and Junge, but it might open new
perspectives to go beyond the one-dimensional
setting.
\end{abstract}

\section{Introduction}

We start by presenting the model and the main conjectures for which we give new proofs.
We refer to~\cite{LS24} for a more general presentation of these
conjectures (where they correspond to Conjectures~1,~11,~12 and~17),
along with other nice predictions on the model.

\subsection{Presentation of the model}
\label{sec-presentation}

Activated Random Walks is a system of interacting particles which is
defined as follows.
A configuration of the model on a graph consists of a
certain number of particles on each vertex, each particle being either
active or sleeping.
Active particles perform independent continuous-time random walks with
jump rate~$1$, according to a certain jump kernel on
the graph.
When an active particle is alone on a site, it falls asleep at a
certain rate~$\lambda>0$.
A sleeping particle stops moving and is instantaneously reactivated
as soon as it shares its site with at least one other particle.
If reactivated, the particle resumes its continuous-time random walk.

We say that the system fixates if every site of the graph is visited
only finitely many times, and otherwise we say that the system stays
active.
The model on~$\Z^d$, for every~$d\geq 1$, undergoes the following
phase transition:

\begin{theorem}
\label{thm-phase-transition}
In any dimension~$d\geq 1$, for every sleep
rate~$\lambda>0$ and every translation-invariant jump
kernel on~$\Z^d$ which
generates all~$\Z^d$, there exists~$\rc\in(0,1)$ such that, for every
translation-ergodic initial distribution with no sleeping particles
and an average density of active particles~$\rho$, the Activated
Random Walk model on~$\Z^d$ with sleep rate~$\lambda$ almost surely
fixates if~$\rho<\rc$, whereas it almost surely stays active
if~$\rho>\rc$.
\end{theorem}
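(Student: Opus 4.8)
The plan is to follow the classical argument of Rolla and Sidoravicius, organized around the Diaconis--Fulton abelian representation. First I would encode all the randomness of the dynamics as a field of independent instruction stacks: at each site $x\in\Z^d$ sits an i.i.d.\ sequence of instructions, each being either a displacement sampled from the jump kernel or the symbol $\s$ (``attempt to fall asleep''), together with independent rate-$1$ Poisson clocks. For a finite box $V\subset\Z^d$, define the stabilization of $V$ by repeatedly \emph{toppling} active sites --- executing and discarding the top instruction of their stack --- with any particle that leaves $V$ absorbed at a sink on $\partial V$; the abelian property guarantees that the resulting odometer $m_V\colon V\to\Z_{\ge 0}$, counting the instructions consumed at each site, does not depend on the order of topplings. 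Since $m_V$ is nondecreasing in $V$, the limit $m(x)=\lim_{V\uparrow\Z^d}m_V(x)\in[0,\infty]$ is well defined, and a standard argument (using irreducibility of the kernel) identifies fixation of the infinite system with the event $\{\, m(x)<\infty \text{ for all } x \,\}$.

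Next I would record two structural facts. The event $\{\, m<\infty \,\}$ is invariant under the $\Z^d$-translations acting jointly on the instruction field and on the initial configuration, so under any translation-ergodic initial law it has probability $0$ or $1$; hence for each such law the system almost surely fixates or almost surely stays active. Moreover a second use of the abelian property shows that adding particles to the initial configuration can only increase each $m_V(x)$, hence $m(x)$; combined with the monotone coupling of the Poisson initial laws (which are ordered in $\rho$ by superposition), this makes the set of densities for which the Poisson initial condition fixates a down-set in $\rho$, whose supremum I call $\rc$. The step I expect to be the main obstacle is to show that this same $\rc$ governs \emph{every} translation-ergodic initial law of density $\rho$, not only the Poisson one: no monotone coupling is available between a general ergodic law and the Poisson reference, so one must instead exploit the abelian property together with an ergodic-averaging estimate on the integrated odometer $\E[m_V(0)]/|V|$ to prove that whether the origin topples infinitely often depends on the initial law only through its density.

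It then remains to locate $\rc$ strictly inside $(0,1)$. For $\rc>0$ I would bound the expected odometer at the origin when $\rho$ is small: exploring the particles one at a time, a first-moment estimate shows that for $\rho$ small enough each particle meets the symbol $\s$ while alone --- and hence settles --- before travelling far, so that $\sup_V\E[m_V(0)]<\infty$; then $m(0)<\infty$ almost surely, and the zero--one law upgrades this to almost sure fixation. For $\rc<1$, observe that on a fixating realization every particle eventually sleeps, so the stabilized configuration carries at most one particle per site; a conservation-of-mass argument (the stabilized configuration has the same density $\rho$ as the initial one) forces activity as soon as $\rho>1$, giving $\rc\le 1$, and a quantitative refinement tracking the particles necessarily ejected from a large box sharpens this to the strict inequality $\rc<1$. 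Together with the monotonicity and universality above, this establishes the phase transition with $\rc\in(0,1)$.
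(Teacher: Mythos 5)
You should first be aware that the paper does not prove Theorem~\ref{thm-phase-transition} at all: it is quoted as background, attributed to Rolla--Sidoravicius--Zindy for the existence of~$\rc$ and its dependence on the density alone, and to a long list of further works for the non-triviality~$\rc\in(0,1)$. So there is no ``paper proof'' to compare against; I can only assess your sketch on its own terms. Your overall architecture is the standard and correct one: the Diaconis--Fulton instruction stacks, the abelian property and the site-wise odometer $m_V$, monotonicity of $m_V$ in $V$ and in the initial configuration, the identification of fixation with $\{m<\infty\}$, the translation-invariance/ergodicity zero--one law, and the resulting down-set of fixating densities whose supremum defines~$\rc$. You also correctly flag that extending from Poisson (or product) initial laws to arbitrary translation-ergodic ones is a separate, genuinely delicate step (this is the content of the Rolla--Sidoravicius--Zindy paper), so I will not count that against you.

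The genuine gap is in the last paragraph. The inequality $\rc\le 1$ does follow from the mass-conservation observation you give (a fixated configuration has at most one sleeping particle per site), but the \emph{strict} inequality $\rc<1$ for every $\lambda>0$ and every $d\ge 1$ is not ``a quantitative refinement tracking the particles necessarily ejected from a large box.'' It is the hardest part of the theorem, and the reason the paper cites roughly eight separate works for non-triviality: the known proofs require substantial new mechanisms (weight-function/Lyapunov arguments, multi-scale or block constructions, comparison with auxiliary processes), were obtained only gradually (first for $d=1$ or biased walks or small $\lambda$, with the general case settled only recently), and none of them is a perturbation of the counting argument that gives $\rc\le 1$. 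The difficulty is that at density $1-\varepsilon$ one must show that a positive fraction of particles is forced out of every large box with high probability, which requires controlling the interaction between the sleeping mechanism and the walks in a way that no first-moment or conservation argument provides. Your $\rc>0$ sketch is closer to a real proof (a first-moment bound on the odometer does work for small $\rho$), but even there the claim that each particle ``settles before travelling far'' needs a careful exploration scheme to avoid circularity between the particles' trajectories and the carpet of already-settled particles. As written, the proposal establishes the existence of a critical density in $[0,1]$ for product initial laws, but not the two claims that make the theorem non-trivial: universality over ergodic laws and $\rc\in(0,1)$ strictly.
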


This result is due to Rolla, Sidoravicius and
Zindy~\cite{RS12,RSZ19} who showed the existence
of the threshold density and its dependence on the mean density of particles
only, and to a series of
works~\cite{RS12,ST17,ST18,Taggi19,BGH18,HRR20,FG22,Hu22,AFG22}
which established the non-triviality of the
threshold, that is to say, that it is strictly between~$0$ and~$1$.
Note that, unlike some other models, the behaviour of Activated Random
Walks is not trivial even in one dimension.

A configuration of the model on a graph~$(V,E)$ can be represented by a
vector~$\eta:V\to\N\cup\{s\}$, where~$\eta(x)=k\in\N$ means that
there are~$k$ active particles at~$x$ and~$\eta(x)=\s$ means that
there is one sleeping particle at~$x$.
A site~$x$ is called unstable in~$\eta$ if it hosts at least one
active particle, and otherwise it is called stable.
We say that a configuration~$\eta$ is stable in some subset~$U\subset
V$ if~$U$ does not contain any active particle, i.e.,
if~$\eta(x)\in\{0,\,\s\}$ for every~$x\in U$.
If~$U\subset V$ we denote by~$\|\eta\|_U$ the total number of
particles (active or sleeping) in~$U$ in the configuration~$\eta$.

In the remainder of the paper, we restrict the presentation to the
one-dimensional case and we fix once and for all a sleep
rate~$\lambda>0$ and a translation-invariant
nearest-neighbour jump kernel on~$\Z$, which simply boils down to the
choice of a probability to jump
to the left, denoted by~$p\in(0,1)$.
All the statements hold for every~$\lambda>0$ and for
every~$p\in(0,1)$.

\subsection{The driven-dissipative Markov chain}
\label{sec-driven-dissipative}

The density conjecture, which is the content of
Theorem~\ref{thm-dd} below, connects the phase transition described
above with another version of the model called the driven-dissipative
system.

Fix an integer~$n\geq 1$, and consider the
segment~$V_n=\{1,\,\ldots,\,n\}\subset\Z$.
The driven-dissipative system consists of a Markov chain on the
finite set~$\{0,\,\s\}^{V_n}$ of
all stable configurations on~$V_n$.
At each time step of the Markov chain, an active particle is added to
a site of~$V_n$ chosen uniformly at random, and we let the system
evolve, with particles being killed when they jump out of~$V_n$ (by
the left or right exit), until a new stable configuration is reached.
This new stable configuration
gives the state of the Markov chain at the following time step.
This defines an irreducible and aperiodic Markov chain, which is
called driven-dissipative because the
system is driven by addition of active particles and there is
dissipation at the borders of~$V_n$.
We denote by~$S_n$ the number of sleeping
particles in a configuration sampled from the stationary distribution
of this Markov chain.

\subsection{Density conjecture}
\label{sec-statement-dd}

The density conjecture states that, when the length of the segment
tends to infinity, the density of particles~$S_n/n$ in this stationary
distribution concentrates around the critical density of
Theorem~\ref{thm-phase-transition}.
More precisely, we show the following result:

\begin{theorem}
\label{thm-dd}
We have~$S_n/n\to\rc$ in probability as~$n\to\infty$.
Moreover, for every~$\rho<\rho_c$ there exists~$c>0$ such that for
every~$n\geq 1$,~$\PP(S_n\leq\rho n)\leq e^{-cn}$.
\end{theorem}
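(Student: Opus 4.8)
The strategy is to first extract from superadditivity alone a limiting density $\rs$ together with an exponential bound on its lower tail, and then to identify $\rs$ with $\rc$: the lower bound $\rs\ge\rc$ will use the description of the phase transition in terms of the number of particles exiting a segment (the role of~\cite{RT18,For24}) together with the sampling technique of~\cite{LL21}, while the upper bound $\rs\le\rc$ will come from a comparison with the model on $\Z$.

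\emph{Step 1: existence of $\rs$ and the lower tail.} A stable configuration carries at most one particle per site, so $0\le S_n\le n$; hence by the superadditivity of Section~\ref{sec-superadd} the sequence $\E[S_n]$ is superadditive and bounded by $n$, and Fekete's lemma gives $\E[S_n]/n\to\rs:=\sup_{n\ge1}\E[S_n]/n\in[0,1]$. Moreover, being proved by a direct coupling, the superadditivity should be strong enough that $S_{kn}$ stochastically dominates a sum of $k$ independent copies of $S_n$ (the $kn$-segment containing $k$ independently driven copies of the $n$-segment dynamics). Fixing $n_0$ with $\E[S_{n_0}]/n_0>\rho$ and applying a Cram\'er bound to this sum of bounded i.i.d.\ variables yields $\PP(S_{kn_0}\le\rho kn_0)\le e^{-c'k}$; writing a general $N$ as $kn_0+r$, dropping the contribution of the last block and slightly enlarging $\rho$ inside $(\rho,\rs)$, one gets $\PP(S_N\le\rho N)\le e^{-cN}$ for all $N\ge1$ and every $\rho<\rs$ (adjusting $c$ for the finitely many small $N$, for which $\PP(S_N\le\rho N)<1$ since the chain is irreducible). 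This is essentially the corollary of Section~\ref{sec-cv}, and it already yields $\liminf_n S_n/n\ge\rs$ in probability.

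\emph{Step 2: $\rs\ge\rc$.} Fix $\rho<\rc$ and set $m=\lceil\rho n\rceil$. Running the driven-dissipative chain on $V_n$ from the empty configuration for $m$ steps produces exactly the stabilisation of $m$ i.i.d.\ uniform active particles on $V_n$; by~\cite{RT18,For24} only $o(n)$ of these exit, so the resulting stable configuration contains $(1-o(1))\rho n$ sleeping particles with probability tending to $1$. On the other hand, running the chain for the same $m$ steps from the stationary configuration leaves its law unchanged, and, the empty configuration being dominated by every configuration for the (suitably extended) coordinatewise order $0\preceq\s$, the Abelian property makes the stabilisation map monotone, so under a common coupling the stationary run retains at least as many sleeping particles as the empty one; equivalently, the sampling construction of~\cite{LL21} realises $S_n$ as a monotone limit of the number of sleeping particles after finitely many uniform additions. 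Hence $\E[S_n]\ge(1-o(1))\rho n$, so $\rs\ge\rho$; letting $\rho\uparrow\rc$ gives $\rs\ge\rc$.

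\emph{Step 3: $\rs\le\rc$ and convergence in probability.} It remains to show that $\PP(S_n>\rho' n)\to0$ for every $\rho'>\rc$, which yields both $\rs\le\rc$ and the upper half of the convergence $S_n/n\to\rc$. Superadditivity gives nothing here, and I would argue by comparison with the model on $\Z$: if $\rs>\rc$, pick $\rho\in(\rc,\rs)$; by Step~1, for $n$ large the stationary configuration on $V_n$ carries at least $\rho n$ sleeping particles with probability close to $1$, so reactivating all its particles, juxtaposing independent copies over $\Z$, and applying a uniform random shift produces a translation-ergodic initial condition of mean density at least $\rho>\rc$, which by Theorem~\ref{thm-phase-transition} almost surely stays active; one must then reach a contradiction from the fact that each block is a reactivated driven-dissipative stationary configuration and therefore absorbs the particles leaking in from its neighbours without its own density running away. \textbf{This is the main obstacle}: it is precisely the point where Hoffman, Johnson and Junge invoke their sharper upper-tail concentration, obtained via the percolation representation, and without it the argument has to be run softly, which is why one only reaches convergence in probability and not an exponential upper-tail bound (and hence misses the polynomial-to-exponential transition on the cycle discussed after Proposition~8.12 of~\cite{HJJ1}). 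Once $\rs=\rc$ is established, Steps~1 and~2 combine to give $S_n/n\to\rc$ in probability together with the exponential lower-tail estimate, which is the content of Theorem~\ref{thm-dd}.
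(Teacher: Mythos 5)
Your Steps 1 and 2 are essentially the paper's argument. Step 1 is the content of Lemma~\ref{lemma-cv} (note only that Proposition~\ref{prop-superadd} carries a $+1$ defect, so one works with $X_n=S_{2n-1}$ rather than literally tiling $V_{kn}$ by $k$ blocks; this is cosmetic). Step 2 matches Section~\ref{sec-dd-lower} in spirit: the paper realises $S_n$ via Lemma~\ref{lemma-sampling} as the stabilisation of $\1_{V_n}$ and thins this to an i.i.d.\ Bernoulli$(\rho)$ configuration by forcing out particles with acceptable topplings, which puts it exactly in the hypotheses of Lemma~\ref{lemma-RT}; your variant with $\lceil\rho n\rceil$ uniform placements needs the extra (routine) coupling with an i.i.d.\ configuration and the domination $Y_m\preceq S_n$, both of which the paper supplies elsewhere (Section~\ref{sec-hockey}).

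Step 3 is a genuine gap, and you have misdiagnosed where the difficulty sits. The upper bound $\rs\le\rc$ does \emph{not} require any upper-tail concentration: Lemma~\ref{lemma-cv} already gives the two-sided convergence in probability $S_n/n\to\rs$ (the upper half by a soft first-moment argument from the lower-tail bound and $\E S_n/n\to\rs$), so it suffices to derive a contradiction from $\rs>\rc$ using convergence in probability alone. The paper does this (Section~\ref{sec-rs-leq-rc}) not by a comparison with the infinite-volume dynamics, but with Lemma~\ref{lemma-fraction-exits}: it fixes the numbers $\ell$ and $r$ of particles exiting left and right on a suitable event $\mathcal A_k$ of probability bounded below, stabilises $\1_{V_n}$ by first repeatedly toppling the leftmost active site until $\ell$ particles have left, then the rightmost until $r$ have left; the leftmost/rightmost priority guarantees the intermediate configuration is all active with about $\rs n$ particles, so Lemma~\ref{lemma-fraction-exits} forces a further $\varepsilon n$ particles out with probability at least $c$, giving $\PP(S_n/n\le\rs-\varepsilon/3)\ge c/(2(K+1))$ uniformly in large $n$ — contradicting $S_n/n\to\rs$. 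Your proposed route (juxtaposing reactivated stationary blocks on $\Z$ and invoking Theorem~\ref{thm-phase-transition}) is not closed: sustained activity of the infinite system does not by itself contradict anything about the finite stationary measures, and the step you flag as ``absorbs the particles leaking in from its neighbours'' is precisely the assertion you would need to prove. As written, the theorem is not established; you need to replace Step 3 by an argument of the above type based on Lemma~\ref{lemma-fraction-exits}.
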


In~\cite{HJJ1} a stronger result is obtained, with an exponential
bound also on the upper tail of~$S_n/n$ (see their Proposition~8.6),
and an interesting consequence is established in~\cite{BHS24}, namely
that the model on~$\Z$ with supercritical density but only one
active particle remains active with positive probability.

\subsection{Hockey stick conjecture}

While Theorem~\ref{thm-dd} shows a convergence for the stationary
measure of the driven-dissipative Markov chain, the hockey stick
conjecture complements this information with a prediction about the
behaviour of this Markov chain at all times.
Fix an integer~$n\geq 1$.
For every~$t\in\N$, we denote by~$Y_t$ the number of particles
remaining in~$V_n$
after~$t$ steps of the driven-dissipative Markov chain on~$V_n$, started
with the empty configuration at time~$t=0$.
The hockey stick conjecture consists in the following result:

\begin{theorem}
\label{thm-hockey}
For every~$\rho>0$ we have~$Y_{\lceil\rho
n\rceil}/n\to\min(\rho,\,\rc)$ in probability when~$n\to\infty$.
\end{theorem}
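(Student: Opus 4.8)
The plan is to reformulate $Y_t$ by means of the Abelian property, to extract the upper bound from the density conjecture, and to extract the lower bound from the superadditivity of Sections~\ref{sec-superadd}--\ref{sec-cv} together with a subcritical exit estimate. By the Abelian property, running the driven-dissipative chain for $t$ steps from the empty configuration produces the same stable configuration as placing $t$ particles at i.i.d.\ uniformly chosen sites of $V_n$ and stabilizing all of them at once, with particles killed as soon as they leave $V_n$. Writing $N_t$ for the number of particles that exit $V_n$ during this stabilization, we have $Y_t=t-N_t$ and $0\le Y_t\le\min(t,n)$. Adding the particles one at a time shows, again by the Abelian property, that along this coupled construction $N_t$ is nondecreasing in $t$ (a new particle can only cause further exits); note that $Y_t$ itself is \emph{not} monotone, since one avalanche may eject many previously sleeping particles.

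For the upper bound $Y_{\lceil\rho n\rceil}\le\min(\rho,\rho_c)\,n+o(n)$ with high probability, the trivial inequality $Y_t\le t$ gives the bound $\rho$, and for the bound $\rho_c$ I would compare the chain started from the empty configuration with the chain started from its stationary distribution, coupling them through the same addition sites and the same stacks of instructions. The monotonicity of the Activated Random Walk dynamics with respect to the initial configuration (Section~\ref{sec-notation}) then yields $Y_t\preceq_{\mathrm{st}}S_n$ for every $t\ge 0$, and Theorem~\ref{thm-dd} ensures $\PP(S_n\ge(\rho_c+\varepsilon)n)\to 0$.

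For the lower bound the two regimes are treated separately. When $\rho<\rho_c$ the point is that essentially no particle leaks out, i.e.\ $N_{\lceil\rho n\rceil}/n\to 0$ in probability; this is the finite-volume form of fixation below $\rho_c$ and follows from the results of~\cite{RT18,For24} relating the critical density to the number of particles leaving a segment (alternatively, from the convergence established in Section~\ref{sec-cv} combined with Theorem~\ref{thm-dd}), and it already gives $Y_{\lceil\rho n\rceil}/n\to\rho$. When $\rho\ge\rho_c$, I would first check that $\E[Y_t]$ is nondecreasing in $t$: writing $\E[Y_{t+1}]-\E[Y_t]=1-\E[\Delta N_{t+1}]$, where $\Delta N_{t+1}\ge 0$ is the number of exits at step $t+1$, the domination $Y_t\preceq_{\mathrm{st}}S_n$ together with the monotonicity of the exit current (which is a monotone functional of the configuration, by monotonicity of the odometer) forces $\E[\Delta N_{t+1}]$ to stay below its stationary value $1$. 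Combining this with the subcritical estimate applied at an arbitrary $\rho'<\rho_c$ gives $\liminf_n\E[Y_{\lceil\rho n\rceil}]/n\ge\rho_c$, hence, with the upper bound, $\E[Y_{\lceil\rho n\rceil}]/n\to\rho_c$; convergence in probability then follows by feeding in the exponential lower-tail concentration of $Y_{\lceil\rho n\rceil}/n$ around its mean that comes out of the superadditivity property of Section~\ref{sec-cv}.

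The main obstacle is precisely the supercritical lower bound: the mean convergence $\E[Y_{\lceil\rho n\rceil}]/n\to\rho_c$ by itself does not rule out that a macroscopic fraction of the particles is ejected on an event of non-vanishing probability — avalanches are exactly what blocks a pathwise monotonicity argument here — so the exponential lower-tail bound coming from superadditivity is genuinely needed. Everything else is careful bookkeeping with the Abelian property and the known monotonicity of the model; and, as pointed out in the introduction, the absence of a comparably cheap \emph{upper}-tail bound is the reason this route yields slightly less than the percolation method of~\cite{HJJ1,HJJ2}.
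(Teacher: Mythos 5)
Your reformulation of $Y_t$ via the Abelian property, the trivial bound $Y_t\le t$, and the domination $Y_t\preceq_{\mathrm{st}}S_n$ obtained by coupling with the stationary chain reproduce the paper's upper bound, and your subcritical lower bound is essentially the paper's as well --- modulo one point you gloss over: the leak estimate (Lemma~\ref{lemma-RT}) is stated for i.i.d.\ initial configurations, so even in the subcritical regime you must first couple the $\lceil\rho n\rceil$ uniformly placed particles from above with an i.i.d.\ Bernoulli configuration of slightly smaller density and invoke the monotonicity of Lemma~\ref{lemma-mon-active}.

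The genuine gap is in your supercritical lower bound. Its final step ``feeds in the exponential lower-tail concentration of $Y_{\lceil\rho n\rceil}/n$ around its mean that comes out of the superadditivity property of Section~\ref{sec-cv}'', and you insist this ingredient is ``genuinely needed''. But no superadditivity is established for the sequence $(Y_{\lceil\rho n\rceil})_{n}$: Proposition~\ref{prop-superadd} and Lemma~\ref{lemma-cv} concern $S_n$ only, and the proof of Proposition~\ref{prop-superadd} uses the specific initial condition of one particle per site (the restriction of $t$ uniformly placed particles to the two sub-segments does not have the right marginal law), so as written this step has no support. It is also unnecessary: since $Y_t\le n$ almost surely and $Y_t\preceq_{\mathrm{st}}S_n$ with $S_n/n\to\rho_c$ in probability, convergence of $\E[Y_{\lceil\rho n\rceil}]/n$ to $\rho_c$ already forces convergence in probability by a short truncation argument. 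More to the point, the paper avoids the whole detour: for every $\rho>0$ it picks $\rho''<\min(\rho,\rho_c)$, couples the uniform placement from below with an i.i.d.\ Bernoulli$(\rho'')$ configuration, and applies Lemma~\ref{lemma-mon-active} together with Lemma~\ref{lemma-RT}; this yields the lower bound $\min(\rho,\rho_c)$ in both regimes at once, with no need for monotonicity of $\E[Y_t]$ in $t$. Incidentally, your argument that $\E[\Delta N_{t+1}]\le 1$ rests on monotonicity of the exit count for ordered \emph{stable} configurations containing sleeping particles, which Lemma~\ref{lemma-mon-active} does not cover (it requires all particles active); if you do want monotonicity in $t$, note that Lemma~\ref{lemma-mon-active} applied to the all-at-once stabilizations of $\sum_{i\le t}\delta_{X_i}\le\sum_{i\le t+1}\delta_{X_i}$ shows that $Y_t$ is in fact stochastically nondecreasing in $t$, contrary to your parenthetical remark.
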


The denomination hockey stick refers to the shape of the curve of the
function~$\rho\mapsto\min(\rho,\,\rc)$.
This result, also conjectured more generally in~\cite{LS24}, was first
proved in~\cite{HJJ2} using the technology developed by~\cite{HJJ1}.

\subsection{Ball conjecture}

Another interesting setting consists of~$k$ active particles started
at the origin of the line~$\Z$, with no particles on the
other sites of~$\Z$.
We then denote by~$A_k$ the random set of sites which are visited at
least once when the system evolves from this initial configuration.
This set is called the aggregate, and we show that the final density
of sleeping particles
particles in the aggregate
concentrates around the critical density:

\begin{theorem}
\label{thm-ps}
We have~$k/|A_k|\to \rc$ in probability as~$k\to\infty$.
\end{theorem}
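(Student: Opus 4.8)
The plan is to connect the aggregate $A_k$ produced by $k$ particles launched at the origin to the driven-dissipative picture controlled by Theorem~\ref{thm-dd}. By the Abelian property, the set of visited sites does not depend on the order in which we fire particles, so I would stabilize the $k$ particles one at a time: add one active particle at the origin, stabilize (now on all of $\Z$, but after stabilizing inside an interval we kill nothing, we only record visited sites), then add the next, and so on. The key geometric observation is that after stabilizing $j$ particles the occupied-plus-visited region is an interval $[-L_j^-,L_j^+]$ containing the origin, since in one dimension the trace of a random walk between two points is the whole segment; hence $A_k = [-L_k^-, L_k^+]$ and $|A_k| = L_k^- + L_k^+ + 1$. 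So it suffices to understand the growth of the two endpoints. The intuition is that inside the aggregate the configuration should look like a near-critical stable configuration — density close to $\rc$ — because a site further inside has been ``driven'' many times by particles passing through, exactly as in the driven-dissipative chain; the leftover $k$ particles are spread at density $\approx\rc$ over an interval of length $\approx k/\rc$.

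Concretely I would prove the two matching bounds separately. For the \emph{lower bound} $|A_k|\gtrsim k/\rc$ (equivalently, an upper bound on the density inside the aggregate), suppose for contradiction that with non-negligible probability $|A_k|\leq (1-\varepsilon)k/\rc$; then the stabilized configuration on the interval $V=A_k$ would hold $k$ sleeping particles on $|A_k|\leq(1-\varepsilon)k/\rc$ sites, i.e.\ density at least $(1-\varepsilon)^{-1}\rc/1 > \rc$ is forced, contradicting the fact that a stable configuration reached by legal topplings inside an interval, fed only from inside, cannot sustain a density above $\rc$ — this is where I would invoke the superadditivity/concentration machinery of the paper, in the form already packaged as the upper part of the density statement (or as the bound $\rs\le\rc$ via the outflow criterion of~\cite{RT18,For24}). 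For the \emph{upper bound} $|A_k|\lesssim k/\rc$, i.e.\ a lower bound on the internal density, I would run the comparison in the other direction: the aggregate from $k$ origin-particles dominates (in the sense of site-set inclusion, again by Abelianness and monotonicity) a driven-dissipative experiment on a fixed interval $V_n$ with $n\approx(1+\varepsilon)k/\rc$ driven from a single site, after $k$ drives; by Theorem~\ref{thm-dd} (the lower-tail bound $\PP(S_n\le\rho n)\le e^{-cn}$ for $\rho<\rc$) such an interval absorbs, with overwhelming probability, at least $\rho n > k$ of the added particles before any escape, forcing $A_k\subseteq V_n$ and hence $|A_k|\le n$.

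The main obstacle I anticipate is making the reduction from ``$k$ particles dumped at the origin of the line'' to ``a finite driven-dissipative chain'' fully rigorous, in two respects. First, the driven-dissipative chain adds particles at a \emph{uniformly random} site of $V_n$, whereas here all $k$ particles enter at one fixed site; I would handle this by an Abelian sandwich — stabilizing everything at the end is order-independent, so feeding all mass at the center and then letting it diffuse dominates (and is dominated by) feeding it at random sites, up to boundary-effect error terms that are $o(n)$ — but the details of this monotone coupling need care. Second, and more delicate, is the boundary region of the aggregate: near the two endpoints $\pm L_k^\pm$ the density is genuinely below $\rc$ (it tapers to $0$), and I must argue this transition zone has length $o(k)$, which again should follow from the exponential lower-tail control in Theorem~\ref{thm-dd} applied to sub-intervals, but quantifying it is the crux. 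Once both one-sided bounds are in hand, combining them gives $|A_k|/k \to 1/\rc$ in probability, and the convergence is in fact with an exponential rate on the side coming from Theorem~\ref{thm-dd} and only a weak rate on the other side, mirroring the asymmetry already noted for $S_n$.
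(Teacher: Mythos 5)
Your plan correctly identifies the right heuristic (density $\approx\rc$ inside the aggregate) and the right target bounds, but both halves stop exactly where the actual work lies, and the mechanisms you propose to close them do not work as stated.

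For the lower bound on $|A_k|$, the assertion that a stable configuration ``fed only from inside cannot sustain a density above $\rc$'' is not available in the form you need. The point-source stabilization takes place on all of $\Z$ with no killing, so the outflow criteria you cite (Lemma~\ref{lemma-fraction-exits}, or the bound $\rs\le\rc$) do not apply directly: Lemma~\ref{lemma-fraction-exits} only gives a \emph{constant} lower bound $c$ on the probability of ejecting a fraction of the particles, which is far too weak to survive the union bound over the $\sim n$ possible positions of the interval containing $A_k$. The paper's actual reduction is Lemma~\ref{lemma-inner}: starting from $\1_{V_n}+k\delta_x$ and stabilizing in two different orders (first eject the $k$ extra particles, versus first eject the carpet), Lemma~\ref{lemma-monotonicity} yields the clean inequality $\PP(x+A_k\subset V_n)\le\PP(S_n\ge k)$, after which only the convergence $S_n/n\to\rc$ and a chunked union bound (over $O(1)$ shifted windows, not $n$ of them) are needed. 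Note also that $k\delta_x$ and $\1_{V_n}$ are not comparable pointwise, so plain monotonicity cannot substitute for this two-scenario comparison.

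For the upper bound on $|A_k|$, the step ``by the lower-tail bound the interval absorbs at least $\rho n>k$ of the added particles before any escape, forcing $A_k\subseteq V_n$'' is false. In the driven-dissipative chain particles are killed at the boundary with positive probability at every stage, and a large stationary count $S_n$ in no way prevents early escapes; moreover ``no particle is killed at the boundary of $V_n$'' is itself weaker than ``no particle ever steps outside $V_n$,'' which is what $A_k\subseteq V_n$ requires. The paper's argument is different: first spread the $k$ particles by acceptable topplings into the holes of an i.i.d.\ Bernoulli$(\rho'')$ carpet with $\rho''<\rc$ (legitimised by Lemma~\ref{lemma-monotonicity}), producing a genuinely subcritical i.i.d.-type configuration on an interval of length $\approx k/\rho''$; then Lemma~\ref{lemma-RT} shows few particles exit that interval in the killed dynamics, and Lemma~\ref{lemma-NML} converts ``few exit $V_n$'' into ``with high probability nothing leaves a slightly enlarged interval'' for the unkilled dynamics. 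Without this spreading step you cannot invoke Lemma~\ref{lemma-RT} (the initial condition $k\delta_0$ is not i.i.d.\ subcritical), and without Lemma~\ref{lemma-NML} you have no control on the visited set in $\Z$. The boundary ``transition zone'' you flag as the crux is indeed handled by exactly these two lemmas, not by Theorem~\ref{thm-dd}.
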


Propositions~8.7 and~8.8 in~\cite{HJJ1} make this result more precise
by showing that with probability at least~$1-e^{-ck}$ the aggregate
contains a segment and is contained in a slightly larger segment, both
centered on the origin.
Note that in our setting the limit of the aggregate is not necessarily
centered on the origin
because we also include the case of biased walks (for which the
technology of~\cite{HJJ1} is expected to extend).

This property is known as the ball conjecture because in higher
dimensions it is conjectured that the limiting shape of the
aggregate is a Euclidean ball centered on the origin, with a
density~$\rc$ of sleeping particles inside~\cite{LS24}.

\subsection{Content of this paper}

This paper presents a new proof, still in the one-dimensional case, of the
three conjectures presented above.
These conjectures were already proved by Hoffman, Johnson and Junge
in~\cite{HJJ1,HJJ2}, but using a different technique based on a
percolation process.
They use a superadditivity property of the percolation process to
obtain concentration bounds on~$S_n$.

The main added value of the present paper consists in a new
superadditivity property directly established for~$S_n$, without
relying on a percolation process.
The key consequence of this
superadditivity is that~$S_n/n$ converges to a constant
value~$\rs$, with an exponential bound on the lower tail.
The superadditivity property and this corollary are presented in
Section~\ref{sec-ingredients}, along with several earlier results on
which we rely, and some open questions.
Their proofs occupy Sections~\ref{sec-proof-superadd} and~\ref{sec-cv}.

Then, to obtain Theorem~\ref{thm-dd} (the density conjecture), there
only remains to show that this limit~$\rs$ is equal to the critical density~$\rc$ of the
model on~$\Z^d$, defined in Section~\ref{sec-presentation}.
This is done in Section~\ref{sec-dd}.

Finally, in Sections~\ref{sec-hockey} and~\ref{sec-ps} we explain how to
deduce the hockey stick conjecture (Theorem~\ref{thm-hockey}) and the
ball conjecture (Theorem~\ref{thm-ps}) from Theorem~\ref{thm-dd}. 

Altogether, we obtain a new self-contained proof of these three
results. 
Yet, our versions of the three conjectures are a bit less precise than
the corresponding results of Hoffman, Johnson and Junge.
In particular, we show an exponential bound only on the lower tail
of~$S_n$, while they also establish an
exponential bound on the upper tail, which is more involved.
As a consequence, we obtain less detailed convergence results, and we
do not get a control on the transition from polynomial to exponential
stabilization time of the model on a cycle, for which our
one-sided concentration bound does not seem to be enough: see
Proposition 8.12 in~\cite{HJJ1}.

That being said,
 we manage to obtain the three conjectures even without an exponential
bound on the upper tail, using tricks to bypass this.
In particular, in Section~\ref{sec-ps} about the ball conjecture, we
want to sum the probabilities of events
of the type~$\{S_n/n\geq\rs+\varepsilon\}$, and if the number of terms
is of order~$n$, one would need to know that the probabilities
decrease fast enough to beat this entropic factor~$n$.
But we are able to avoid this issue by taking advantage of the
margin~$\varepsilon$
in order to end up with a sum of only finitely many terms (this
finite number being large if~$\varepsilon$ is small, but not depending
on~$n$), which allows us to conclude even knowing only that the
probabilities tend to~$0$ but not at which speed.
For more details about this we refer the reader to
Section~\ref{sec-ps} and to the comment at the very end of
Section~\ref{sec-dd}.

\subsection{Self-organized criticality and sandpiles}

One main motivation to study Activated Random Walks is the
quest for a simple model which exhibits self-organized criticality, a
concept coined in by the physicists Bak, Tank and
Wiesenfeld~\cite{BTW87} to describe the behaviour of certain systems
which are spontaneously attracted to a critical-like state.
Unlike ordinary phase transitions, where the critical regime is only
observed for a very special choice of the parameters, self-organized
criticality means that a critical regime is reached without fine
tuning of the parameters.

In Section~\ref{sec-presentation}, we saw that the conservative
dynamics of Activated Random Walks on the infinite lattice (without particle addition or dissipation) exhibits a phase transition in the usual sense,
with two phases separated by a threshold density~$\rc$ (which, if
considered as a function of the sleep rate~$\lambda$, can also be seen
as a critical curve).

Self-organized criticality comes into play when one considers the
driven-dissipative chain described in Section~\ref{sec-statement-dd}.
Thanks to these two mechanisms of addition of particles and
dissipation at the boundaries when the segment becomes
too crowded to accommodate more particles, the system is able to
self-tune to the critical density.
Moreover, the hockey stick conjecture shows that the critical density
is not only reached as the limit after a very large number of steps of
the chain, but is achieved as soon as at least a critical density of
particles has been added to the system.

A stronger version of this conjecture, recently settled
in~\cite{HJJM25}, shows that the driven-dissipative Markov chain
exhibits cutoff exactly at the critical density.
This shows that not
only the density is rapidly self-tuned to the critical density, but
the distribution of the sleeping particles quickly resembles the
stationary distribution.

The idea to achieve self-organized criticality through the
introduction of driving and dissipation in a conservative model which
presents a usual phase transition is more general.
It is already present in the Abelian Sandpile Model, which was
suggested by Bak, Tang and Wiesenfeld to exemplify their concept.
In this model, there is only one type of particle, and a vertex of
the graph is declared unstable when the number of particles on it
exceeds the degree of the vertex.
Unstable sites may topple, sending one particle to each neighbouring
site, which may in turn make some of these neighbours become unstable.
As for Activated Random Walks, one may construct a
driven-dissipative Markov chain for the Abelian Sandpile and study its
stationary distribution.

Due to its more deterministic nature, the Abelian Sandpile Model is
more amenable to exact computations.
This enables the study of its
stationary distribution, which indeed presents critical-like
features such as power law correlations and fractal
structures~\cite{Dhar06,Redig06,Jarai18}.
However, the density conjecture and the hockey stick conjecture,
which were originally predicted for the Abelian Sandpile, turn out to
fail for this model: this has been proved rigorously on some
particular graphs and suggested with numerical simulations on the
two-dimensional lattice~\cite{FLW10a,FLW10b,JJ10}.

In view of these defects, Activated
Random Walks emerged as a variant of the Abelian Sandpile Model
involving more randomness, along
with another variant called the Stochastic
Sandpile Model, which was less studied but
 is expected to behave similarly.
Conjectures about the self-critical behaviour of these two models were
formulated progressively
in~\cite{DMVZ00,DRS10,Rolla20,LS24}.
Recent results about the mixing time of Activated Random Walks
suggest an explanation for the fact that this model behaves better
than the Abelian sandpile, because it mixes faster~\cite{LL21,BS22}.
For a broader comparative overview of sandpile models and Activated
Random Walks, we refer the reader to~\cite{HJJ2} and to the references
therein.

Yet, an important question which remains open for now is that of the
correlations in the stationary distribution of the driven-dissipative
system.
It is expected that the stationary distributions
converge to a limiting distribution on the infinite lattice, and
that the correlations in this limiting distribution decay as
power laws in the distance, indicating the absence of a
characteristic scale.
This would qualify the state reached by
the system as truly critical.

\section{Main ingredients and some perspectives}
\label{sec-ingredients}

Section~\ref{sec-superadd} below presents our main innovation, the
superadditivity property, along with the convergence property which
follows from it.

We then introduce the other ingredients on which we rely, so that the
rest of the paper is self-contained.
In particular, Section~\ref{sec-sampling} explains a technique
of~\cite{LL21} which allows to easily sample from the
stationary distribution, and thus to study~$S_n$.
Then, in Section~\ref{sec-fraction} we state two results
of~\cite{RT18} and~\cite{For24} relating the phase transition of the
model to the number of particles jumping out of a segment, and
in Section~\ref{sec-abelian} we present the site-wise representation
of the model and its Abelian property.
Several open perspectives are then outlined in Section~\ref{sec-open}.

\subsection{The crucial point: superadditivity}
\label{sec-superadd}

Recall the notation~$S_n$ introduced in Section~\ref{sec-statement-dd}
for the number of particles in the segment~$V_n=\{1,\,\ldots,\,n\}$
under the stationary
distribution of the driven-dissipative chain.
The main result of this paper is the following proposition, which
shows an almost superadditivity property for~$S_n$.

A similar superadditivity argument was already at the
heart of the proof of the density conjecture in~\cite{HJJ1}, but not
for~$S_n$.
Their approach uses another variable for which superadditivity is more
direct, but some work is then needed to relate this quantity to~$S_n$.
Here, we use a different approach, showing directly the superadditivity property for~$S_n$.

\begin{proposition}
\label{prop-superadd}
For every~$n,m\geq 1$ the variable~$S_{n+m+1}$ stochastically
dominates the sum of~$S_n$ and of a copy of~$S_m$ which is independent
of~$S_n$.
\end{proposition}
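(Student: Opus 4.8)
The plan is to couple the driven-dissipative chain on $V_{n+m+1}$ with the two independent chains on $V_n$ and $V_m$, and to extract from this coupling a monotone comparison between stable configurations. The key geometric picture is that $V_{n+m+1}$ decomposes as $V_n$, a single separating site, and a translated copy of $V_m$. I would think of running the big chain but ``filtering'' the additions: whenever a particle is added to the left block or to the right block, feed that same addition to the corresponding small chain; additions landing on the middle separating site are handled separately. The hope is that the middle site acts as a dissipative buffer, so that the stabilization inside $V_n$ (respectively inside the right copy of $V_m$) is dominated by, or dominates, the stabilization of the isolated segment.

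\medskip

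Concretely, I would first recall the Abelian/monotonicity toolkit from the cited works: stabilization is abelian, so the final stable configuration does not depend on the order of topplings, and adding more particles can only increase (in the appropriate coordinatewise/odometer sense) the amount of activity and hence, after stabilization, the number of sleeping particles left behind in a sub-segment. Using the sampling technique of~\cite{LL21}, I expect one can realize $S_{n+m+1}$ as the number of sleeping particles produced by stabilizing a suitable sequence of additions on $V_{n+m+1}$, and similarly realize $S_n$ and $S_m$. The comparison I want is: if one restricts the big dynamics to only ``see'' the left block $V_n$ — treating every particle that crosses the separating site as dissipated — one recovers exactly the dynamics of the isolated segment $V_n$, and by abelianness the true big dynamics sends at least as many particles through $V_n$ as this restricted dynamics does. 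The same holds for the right block. Since the two blocks interact only through the middle site, and a particle sitting on the middle site is either dissipated to the outside or pushed into one of the two blocks, I would argue the sleeping particles ultimately deposited in the left block and in the right block are, jointly, stochastically at least $(S_n, S_m')$ with $S_n, S_m'$ independent, because the randomness driving the two blocks (walk steps, sleep clocks) can be taken independent once we condition on the separating-site being a sink.

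\medskip

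The step I expect to be the main obstacle is making precise the claim that ``filtering through the middle site'' exactly reproduces two \emph{independent} copies of the isolated-segment dynamics \emph{and} that the true count dominates the filtered count. There are two subtleties. First, the stationary distribution on $V_{n+m+1}$ is not literally ``stabilize a fixed number of additions'': one must take a limit (or use the $\rho_\star$-type characterization via the number of particles exiting a segment, as in~\cite{RT18,For24}) and check that the domination passes to the stationary regime; I would handle this by running all three chains from empty for the same long time, applying the finite-time monotone coupling, and then passing to the stationary limit. Second, one must ensure that treating the middle site as a sink for the two small segments is legitimate — i.e.\ that a particle entering the middle site from the left, in the big chain, does not ``come back'' and retroactively change what the left block would have done; abelianness of stabilization is exactly what rescues this, since we may topple the left block to completion first, then the middle site, then the right block, and count sleepers blockwise. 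Once these two points are nailed down, the stochastic domination $S_{n+m+1} \succeq S_n + S_m'$ with $S_n \perp S_m'$ follows by summing the blockwise counts and discarding the (nonnegative) contribution of the middle site.
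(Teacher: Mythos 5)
Your setup is the right one---decomposing $V_{n+m+1}$ into a left block, a separating site and a right block, invoking the exact sampling lemma of~\cite{LL21} so that $S_{n+m+1}$, $S_n$ and $S_m$ are all realized by stabilizing one active particle per site, with the independence of the two block counts coming from the disjointness of their instruction stacks---and you correctly identify the crux: showing that turning the middle site into a sink can only decrease, in distribution, the number of particles left behind. But your proposed resolution of that crux does not work. The step ``topple the left block to completion first, then the middle site, then the right block, and count sleepers blockwise'' is not a stabilization: once the middle site topples, it sends particles back into the left block, which wakes sleepers there and restarts activity, so the blockwise count never terminates cleanly and the comparison with the isolated $V_n$ dynamics breaks down. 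More fundamentally, redirecting the middle site's jump instructions to point out of the segment is \emph{not} an operation covered by the Abelian/monotonicity toolkit, which controls the effect of adding particles or enlarging the set of legal topplings, not of altering instructions; and indeed the pathwise inequality (full count at least the sum of the two block counts) is \emph{false} for some realizations of the instruction array, so no deterministic coupling of the kind you sketch can exist. Only a stochastic domination holds, and it has to be extracted by a finer argument.

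The missing idea is the following genuinely one-dimensional device. One replaces the instructions at the origin by ejections \emph{one at a time}, producing an interpolating sequence $N_0, N_1, \ldots$ with $N_0$ equal to the sum of the two independent block counts and $N_k$ converging almost surely to the full count; the whole point is to show that each single replacement can only decrease the result in distribution. To compare two consecutive interpolants when the modified instruction at the origin is, say, a jump to the right, one stabilizes by always toppling the \emph{leftmost} unstable site until exactly $k$ instructions have been consumed at the origin. This priority rule guarantees that at that instant the left block is already stable and the right block contains no sleeping particles, so the two interpolants differ only by one extra \emph{active} particle at the first site of the right block, fed into a fresh instruction array that is independent of the conditioning event. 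Only then does the particle-addition monotonicity lemma apply. Your filtering construction provides neither the all-active configuration nor the single-added-particle comparison that this lemma requires, so the domination you assert remains unproved. (Your worry about passing to the stationary limit, on the other hand, is a non-issue: the exact sampling lemma already identifies the stationary count with a single stabilization of the all-ones configuration.)
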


Note that this is only a property of the distribution of~$S_n$, but we
state it with random variables for concreteness.
The proof of this proposition, which is the main added value of this
paper, is presented in Section~\ref{sec-proof-superadd}.
We welcome any ideas to obtain a similar superadditivity property
without the~$+1$ term,
that is to say, to show that~$S_{n+m}$ dominates an independent sum
of~$S_n$ and~$S_m$.
Yet, this small defect is harmless for what we are interested in,
because if for every~$n\geq 1$ we consider~$X_n=S_{2n-1}$, then
Proposition~\ref{prop-superadd} entails that this
sequence~$(X_n)_{n\geq 1}$ is
stochastically superadditive, in the following sense:

\begin{definition}
\label{def-superadd}
We say that a sequence of real variables~$(X_n)_{n\geq 1}$ is stochastically
superadditive if for every integers~$n,\,m\geq 1$ the variable~$X_{n+m}$
stochastically dominates the sum of~$X_n$ and of a copy of~$X_m$ which
is independent of~$X_n$.
\end{definition}

This superadditivity property has the following consequence, whose
elementary proof is
presented in Section~\ref{sec-cv}.

\begin{lemma}
\label{lemma-cv}
Let~$(X_n)_{n\geq 1}$ be a sequence of non-negative random variables
which is stochastically superadditive.
Then, defining~$\rs=\sup_{n\geq 1}\E X_n/n\in[0,\,\infty]$, we have the convergence
in probability~$X_n/n\to\rs$ as~$n\to\infty$.
Moreover, we have the following exponential bound on the lower tail:
for every~$\rho<\rs$ there
exists~$c>0$ such that, for every~$n$ large enough,
\begin{equation}
\label{lower-bound}
\PP\bigg(\frac{X_n}n\leq \rho\bigg)
\ \leq\ 
e^{-cn}
\,.
\end{equation}
\end{lemma}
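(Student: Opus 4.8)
The plan is to follow the classical Fekete-type argument for superadditive sequences, adapted to the stochastic setting. First I would establish the convergence of $\E X_n/n$. Writing $a_n = \E X_n$, the stochastic superadditivity immediately gives $a_{n+m} \geq a_n + a_m$ by taking expectations (expectation is monotone under stochastic domination), so $(a_n)$ is a superadditive sequence of real numbers. Fekete's lemma then yields $a_n/n \to \sup_{k\geq 1} a_k/k = \rs$, which may be $+\infty$ a priori. This handles the ``expectation level'' statement and identifies the limit.

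Next I would upgrade this to convergence in probability, and simultaneously get the lower-tail bound. The key observation is that iterating the superadditivity $k$ times shows that $X_{kn}$ stochastically dominates a sum of $k$ independent copies of $X_n$. Fix $\rho < \rs$ and pick $n_0$ with $\E X_{n_0}/n_0 > \rho'$ for some $\rho' \in (\rho, \rs)$ — possible by definition of the supremum (if $\rs = \infty$, pick $\rho'$ arbitrarily large). Then for any $N$, write $N = k n_0 + r$ with $0 \leq r < n_0$; since $X_N$ dominates $X_{kn_0}$ (take $m = r$... actually one must be slightly careful: superadditivity gives $X_{kn_0 + r}$ dominates $X_{kn_0} + X_r$, hence dominates $X_{kn_0}$ since $X_r \geq 0$), we get that $X_N$ stochastically dominates a sum $\Sigma_k$ of $k$ i.i.d.\ copies of $X_{n_0}$. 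By the weak law of large numbers (or Markov's inequality on the lower tail won't directly work, so) I would use a Cramér-type large deviation bound: since $X_{n_0} \geq 0$ has $\E X_{n_0} > \rho' n_0$, there is $c' > 0$ with $\PP(\Sigma_k \leq \rho' n_0 k) \leq e^{-c' k}$ — this follows from the standard Chernoff bound, noting that the relevant one-sided moment generating function $\E e^{-t X_{n_0}}$ is finite for all $t \geq 0$ even without any upper moment assumption. Then
\begin{equation*}
\PP\bigl(X_N \leq \rho N\bigr) \leq \PP\bigl(\Sigma_k \leq \rho N\bigr) \leq \PP\bigl(\Sigma_k \leq \rho' n_0 k\bigr) \leq e^{-c'k} \leq e^{-cN}
\end{equation*}
for a suitable $c > 0$, using $\rho N \leq \rho(k n_0 + n_0) \leq \rho' n_0 k$ for $N$ large (and absorbing finitely many small $N$ into the constant). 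This is precisely~\eqref{lower-bound}.

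Finally, for convergence in probability I need both tails. The lower tail is~\eqref{lower-bound}. For the upper tail, the bound $a_n/n \to \rs$ together with Markov's inequality gives, for any $\rho > \rs$ (when $\rs < \infty$), that $\PP(X_N/N \geq \rho) \leq \E X_N/(\rho N) \to \rs/\rho < 1$ — which is \emph{not} quite enough on its own, so instead I would again use the i.i.d.\ domination in the other direction combined with the weak law of large numbers: $\Sigma_k / (k n_0) \to \E X_{n_0}/n_0 \leq \rs$ in probability, but this controls $\Sigma_k$ from below, not $X_N$ from above. The correct move is to observe that superadditivity also bounds $X_N$ from above in expectation only, so for the upper tail I instead note that $\E X_N / N \to \rs$ combined with the lower-tail concentration forces concentration: if $X_N/N$ did not converge to $\rs$, by the lower bound it would have to exceed $\rs + \delta$ with non-vanishing probability along a subsequence, but then one shows $\E X_{2^j N}/(2^j N)$ would exceed $\rs$, contradicting the definition of $\rs$ as the supremum. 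Concretely, from $X_{2N} \succeq X_N + X_N'$ (independent copies) and Fatou/monotone convergence one propagates a lower bound on the limit inferior of $\E X_N/N$; combined with $\sup_n \E X_n/n = \rs$ this pins the limit. The main obstacle I anticipate is exactly this upper-tail step: stochastic superadditivity only gives a clean one-sided LLN, so one must argue that non-concentration from above is incompatible with $\rs$ being the \emph{supremum} (not merely the limit) of the normalized expectations — this is the only place where a little care beyond routine Chernoff/Fekete bookkeeping is needed, and indeed the paper itself flags that only the lower tail is exponential.
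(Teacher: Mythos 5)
Your proposal is correct and follows essentially the same route as the paper: Fekete's lemma for the expectations, iterated stochastic superadditivity plus a one-sided Chernoff bound (using that $\E e^{-tX_{n_0}}<\infty$ for non-negative variables) for the exponential lower tail, and the constraint $\E X_n/n\leq\rs$ combined with the lower-tail concentration to rule out mass above $\rs+\varepsilon$. The only remark is that your upper-tail step is simpler than you make it sound: no passage to $2^jN$ or Fatou is needed, since $\PP(X_N/N\geq\rs+\delta)\geq\epsilon_0$ together with $\PP(X_N/N\leq\rs-\varepsilon')\leq e^{-cN}$ already forces $\E X_N/N>\rs$ for that same $N$ once $\varepsilon'$ is small and $N$ is large, directly contradicting the definition of $\rs$ as a supremum.
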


In the case of~$S_n$, this lemma enables to deduce
that~$S_n/n$ converges in probability as~$n\to\infty$, with an
exponential bound on the lower tail.

Note that no exponential bound on the upper tail follows from
stochastic superadditivity in general, as shown by the following
counter-example.
Consider the symmetric simple random walk on~$\Z^d$ with~$d\geq 3$ and
denote by~$R_n$ its range, i.e., the number of distinct sites visited
by the walk until step~$n$.
Then, the sequence~$(X_n)_{n\geq 1}$ defined by~$X_n=n-R_n$ is
stochastically superadditive (because~$R_n$ is stochastically
subadditive).
The walk being transient, we have~$\rs=\sup\E X_n/n<1$ and, for
every~$\rho\in(\rs,\,1)$, there exists~$c>0$ such
that, for~$n$ large enough,
\[
\PP\bigg(\frac{X_n}n\geq\rho\bigg)
\ =\ 
\PP\bigg(\frac{R_n}n\leq 1-\rho\bigg)
\ \geq\ 
\exp\big(-c\,n^{1-2/d}\big)
\,,
\]
this estimate being obtained by considering the probability that the
walk stays confined inside a ball of volume~$(1-\rho)n$
and using that during~$n^{2/d}$ steps the walk stays in a
ball of radius~$n^{1/d}$ with positive probability.

However, an exponential bound on the upper tail is not required to
establish Theorems~\ref{thm-dd},~\ref{thm-hockey} and~\ref{thm-ps}, for which
convergence in probability of~$S_n/n$ is enough.

\subsection{Exact sampling}
\label{sec-sampling}

We now state a nice key result of~\cite{LL21}
which gives a convenient way to sample~$S_n$:

\begin{lemma}
\label{lemma-sampling}
Fix an integer~$n\geq 1$.
Consider the initial configuration with one active particle on each
site of~$V_n$ and let the system evolve, with particles being killed
when they jump out of~$V_n$, until no active particle remains
in~$V_n$.
Then the distribution of the resulting stable configuration is exactly
the stationary distribution of the driven-dissipative Markov chain
on~$V_n$ defined in Section~\ref{sec-driven-dissipative}.
In particular, the number of sleeping particles remaining in~$V_n$ is
distributed as~$S_n$.
\end{lemma}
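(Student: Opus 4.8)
The plan is to use the Abelian property of the model to check that the law of the stable configuration produced in the lemma is invariant under the driven-dissipative Markov chain. Since that chain is irreducible and aperiodic on the finite set $\{0,\s\}^{V_n}$, it has a unique invariant distribution, so this identifies it with the stationary distribution, and the statement about $S_n$ follows at once.

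Concretely, I would work in the site-wise representation: fix, independently at each site $x\in V_n$, an i.i.d.\ sequence of instructions, each being a jump (to the left with probability $p$, to the right with probability $1-p$) with probability $1/(1+\lambda)$ and a sleep instruction with probability $\lambda/(1+\lambda)$, and write $\mathrm{stab}(\eta)$ for the stable configuration reached from $\eta$ by legal topplings, particles being deleted when they leave $V_n$. The Abelian property gives that $\mathrm{stab}(\eta)$ does not depend on the toppling order and that $\mathrm{stab}\big(\mathrm{stab}(\eta)+\delta_x\big)=\mathrm{stab}(\eta+\delta_x)$, where $\delta_x$ adds an active particle at $x$. Hence one step of the driven-dissipative chain started from $\mathrm{stab}(\eta)$, reading the stacks where the previous stabilization stopped, yields $\mathrm{stab}(\eta+\delta_X)$ with $X$ uniform on $V_n$; so, writing $\1$ for the configuration with one active particle per site, it is enough to prove that $\mathrm{stab}(\1+\delta_x)$ has the same law as $\mathrm{stab}(\1)$ for each fixed $x$, and then average over $x$.

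To establish this I would process $\1+\delta_x$ by first carrying out only the topplings that move the ``extra'' particle initially sitting at $x$. Since every site already carries an active particle, this extra particle is never alone, so it never falls asleep; following its jumps it performs a walk on $V_n$ absorbed at the two endpoints, reading an initial segment of the stack at each site it visits (the sleep instructions it reads along the way are consumed but have no effect), until it exits $V_n$ and is deleted. After this phase the configuration inside $V_n$ is again $\1$, and by the strong Markov property, conditionally on the random path of the extra particle, the unconsumed tails of the stacks form again an i.i.d.\ family with the original law. Stabilizing the leftover $\1$ with these tails therefore has the same law as $\mathrm{stab}(\1)$, and by the Abelian property this is exactly $\mathrm{stab}(\1+\delta_x)$, which closes the argument.

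I expect the main difficulty to be precisely this last point: making rigorous that once the extra particle has been steered out of $V_n$, what is left of the instruction stacks is still an i.i.d.\ family with the original distribution, jointly over all sites and conditionally on the random trajectory. This is a strong-Markov / optional-stopping statement whose bookkeeping needs some care, both because of the sleep instructions read while the extra particle is not alone and because the number of visits to each site is itself random; one also uses the standard facts that $\1$ stabilizes in finite time almost surely and that the two stabilization phases consume disjoint portions of the stacks.
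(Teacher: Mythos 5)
Your proposal is correct and follows essentially the same route as the paper: use the Abelian property to reduce invariance to showing that stabilizing $\1_{V_n}+\delta_x$ has the same law as stabilizing $\1_{V_n}$, then realize the former by first walking the extra particle out of $V_n$ (it never sleeps since every site is occupied) and stabilizing the remaining carpet with the fresh tails of the instruction stacks. The strong-Markov bookkeeping you flag is handled in the paper by the same standard observation used elsewhere in its arguments (the consumed prefix of each stack is determined by a stopping rule, so the remaining tails are independent of it and i.i.d.\ with the original law).
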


This property is not specific to the one-dimensional case and holds
more generally.
Its quite elementary proof goes as follows: let~$\eta$ be the random
configuration obtained after stabilizing the initial configuration
with one active particle per site, and let~$\eta'$ be the
configuration obtained after adding one active particle to~$\eta$ at a
site~$X\in V_n$ chosen uniformly at random, and stabilizing.
Then the Abelian property (see Section~\ref{sec-abelian}) shows
that~$\eta'$ can also be obtained by directly stabilizing the
configuration~$\1_{V_n}+\delta_X$, and this can be done by first
letting the extra particle at~$X$ walk until it jumps out of~$V_n$,
and then stabilizing the remaining~$n$ particles.
This shows that~$\eta'$ has the same distribution as~$\eta$, implying
that the distribution of~$\eta$ is the invariant distribution of
the driven-dissipative chain.
See~\cite{LL21} for more explanations about this proof.

\subsection{Fraction jumping out of a segment}
\label{sec-fraction}

Once the convergence in law of~$S_n/n$ is established, to identify the
limit as~$\rc$ (Theorem~\ref{thm-dd}) and to obtain the hockey stick
conjecture (Theorem~\ref{thm-hockey}), 
we rely on the two following results.
They give information on~$M_n$, which is the number of particles
jumping out of~$V_n$ during stabilization of~$V_n$ with particles
being killed when they exit~$V_n$ (but not necessarily starting with
one active particle per site).
One the one hand, to show the inequality~$\rs\geq\rc$, we use the
following consequence of a
result of~\cite{RT18}:

\begin{lemma}
\label{lemma-RT}
For each~$\rho<\rc$,
if the initial configuration is i.i.d.\ with
density of particles~$\rho$, then~\smash{$\E M_n=o(n)$}.
\end{lemma}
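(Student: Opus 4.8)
The plan is to deduce the estimate from the characterization of the subcritical phase through the \emph{mean outflow} of a segment, established by Rolla and Tournier in~\cite{RT18}; I will not reprove that characterization, but let me recall its structure so as to make clear what is being used and where the difficulty lies. Write $\E_\rho$ for expectation under the i.i.d.\ initial law of density~$\rho$, so that $M_n$ counts the particles killed while stabilizing this law restricted to~$V_n$ (note $M_n\le$ the number of initial particles in $V_n$, so $\E_\rho[M_n]\le\rho n<\infty$). The first point is that $n\mapsto\E_\rho[M_n]$ is subadditive: splitting $V_{n+m}$ into the two adjacent blocks $\{1,\dots,n\}$ and $\{n+1,\dots,n+m\}$, compare the stabilization of $V_{n+m}$ with the modified dynamics in which, in addition, a particle is killed whenever it would cross the edge between sites~$n$ and~$n+1$. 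The modified dynamics has strictly more sinks, hence, by the monotonicity of the stabilized configuration with respect to the set of sinks (a consequence of the Abelian property), it leaves at most as many particles asleep inside $V_{n+m}$ and therefore kills at least as many; on the other hand, with this ``wall'' the two blocks stabilize independently, so the total number of killed particles equals $M_n$ plus an independent copy of~$M_m$. Thus $\E_\rho[M_{n+m}]\le\E_\rho[M_n]+\E_\rho[M_m]$, and $\E_\rho[M_n]/n$ converges to some $\ell(\rho)\in[0,\infty)$ by Fekete's lemma.

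The second point — the heart of the matter, and the content of the relevant statement of~\cite{RT18} — is that $\ell(\rho)=0$ whenever $\rho<\rc$. Indeed, if $\ell(\rho)=\delta>0$ then $\E_\rho[M_n]\ge\delta n/2$ for all large~$n$, and feeding this bound into a restart argument (iterating the stabilizations of a growing nested family of segments, each time re-injecting into the infinite dynamics the particles that the finite stabilization would have killed, and invoking the Abelian property to justify the gluing) one exhibits, in the infinite-volume dynamics on~$\Z$ started from density~$\rho$, a positive density of particles that must leave every bounded region; the system therefore does not fixate, and by Theorem~\ref{thm-phase-transition} this forces $\rho\ge\rc$, a contradiction. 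Passing from~\cite{RT18} to the present statement only requires the routine translation between their conventions (two-sided boxes, and possibly a slightly different boundary rule) and ours, which changes $M_n$ by at most a bounded multiplicative factor and an additive~$O(1)$ — harmless for an $o(n)$ bound. Combining the two points, $\E_\rho[M_n]/n\to0$, i.e.\ $\E_\rho[M_n]=o(n)$.

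The main obstacle is entirely in the second point: one must produce genuine non-fixation of the infinite system from a merely positive \emph{lower} bound on the expected outflow, rather than from a pointwise statement. This is also what prevents the stronger conclusion $\E_\rho[M_n]=O(1)$ via the naive route of bounding $M_n$ by the infinite-volume odometer at the two endpoints of~$V_n$: that odometer is finite almost surely by fixation, but need not be integrable in the subcritical regime, so the endpoint bound alone does not even yield~$o(n)$, and one genuinely needs the subadditive-limit argument together with the non-fixation mechanism of~\cite{RT18}.
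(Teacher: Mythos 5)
The paper offers no proof of this lemma: it is imported wholesale from~\cite{RT18} (``a consequence of a result of~\cite{RT18}''), so the only thing to assess is whether your reduction to that reference is sound. It has one genuine gap. Your first point rests on the assertion that inserting a sink (the ``wall'' on the edge between $n$ and $n+1$) can only decrease the number of particles left asleep, and you present this monotonicity in the set of sinks as ``a consequence of the Abelian property''. It is not: the Abelian property compares different toppling orders for a \emph{fixed} array of instructions (Lemma~\ref{lemma-monotonicity}), whereas adding a sink changes the instructions, and the comparison is delicate precisely because killing particles early deprives their neighbours of wake-ups and can a priori let \emph{more} particles fall asleep. This monotonicity-in-sinks is, up to replacing a site-sink by an edge-sink and the all-ones initial condition by a Bernoulli one, exactly Proposition~\ref{prop-superadd} --- the main result of the paper --- whose proof occupies all of Section~\ref{sec-proof-superadd} and hinges on the one-dimensional left-to-right stabilization trick. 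So your ``easy'' first point silently assumes a variant of the paper's main theorem. Fortunately it is also unnecessary: to get $\E M_n=o(n)$ one only needs to rule out $\limsup_n \E M_n/n>0$, so Fekete's lemma and the subadditivity can simply be dropped.

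You also misplace the difficulty in the second point. The implication needed here (fixation $\Rightarrow$ sublinear expected outflow) has an elementary proof in dimension~$1$, essentially the ``naive route'' you dismiss: every exit from $V_n$ consumes a jump instruction at site $1$ or at site $n$, so $M_n\leq m_{V_n,\eta}(1)+m_{V_n,\eta}(n)\leq m_{\Z,\eta}(1)+m_{\Z,\eta}(n)$, and the infinite-volume stabilization odometer $m_{\Z,\eta}$ is a.s.\ finite for $\rho<\rc$ by Theorem~\ref{thm-phase-transition}. Non-integrability of the odometer is harmless once this is combined with the trivial bound $M_n\leq\|\eta\|_{V_n}$: one gets $\E M_n\leq 2\,\E[\min(m_{\Z,\eta}(0),n)]+o(n)$, and $\tfrac1n\int_0^n\PP(m_{\Z,\eta}(0)>t)\,dt\to0$ by Ces\`aro since the integrand tends to $0$. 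The restart/re-injection machinery you describe is the tool for the \emph{converse} implication (sublinear outflow implies fixation), which is not what this lemma asserts; and note that your sketch of it would anyway only produce particles travelling far, not the infinitely many visits to a fixed site that non-fixation requires.
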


To show the other inequality~$\rs\leq\rc$, we use this result
of~\cite{For24} which is a kind of a reciprocal:

\begin{lemma}
\label{lemma-fraction-exits}
For every~$\rho>\rc$ there
exist~$\varepsilon>0$ and~$c>0$ such that for every~$n\geq 1$, for
every deterministic initial configuration~$\eta:V_n\to\N$ with at
least~$\rho n$ particles, all active, we have~$\PP_\eta(M_n>\varepsilon
n)\geq c$ (where~$\PP_\eta$ denotes the model started from~$\eta$).
\end{lemma}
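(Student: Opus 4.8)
The plan is to deduce this quantitative non-fixation estimate from the phase transition of Theorem~\ref{thm-phase-transition}, arguing by contradiction.

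Suppose the conclusion fails for some $\rho>\rc$. Negating the statement with $\varepsilon=c=1/j$ and letting $j$ be large, we obtain an integer $n$ and a configuration $\eta$ on $V_n$ carrying at least $\rho n$ active particles such that $\PP(M_n(\eta)>\varepsilon n)<\varepsilon$ for some small $\varepsilon>0$. Here $n$ may be taken arbitrarily large: if instead $n$ stayed bounded, then along a subsequence a fixed segment would carry, for every $j$, a configuration with at least $\rho n$ particles whose probability of releasing even a single particle tends to~$0$; but by the least action principle (the Abelian property of Section~\ref{sec-notation}), $M_n$ stochastically dominates its value for one of the \emph{finitely many} configurations on $V_n$ with exactly $\lceil\rho n\rceil$ particles, each of which pushes a particle across the boundary with positive probability, so this probability is bounded below, a contradiction.

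From $\eta$ I build a translation-ergodic initial configuration on $\Z$, with no sleeping particles and density at least $\rho>\rc$, which fixates almost surely; this contradicts Theorem~\ref{thm-phase-transition}. Tile $\Z$ periodically with period~$n$, placing a copy of $\eta$ on each block $B_k:=V_n+kn$, $k\in\Z$, and randomize the overall phase by a uniform shift in $\{0,\dots,n-1\}$; the resulting configuration is $\Z$-translation-ergodic, has density $|\eta|/n\geq\rho$, and carries only active particles. To see that it fixates, recall that stabilizing~$\Z$ is the increasing limit, as $\ell\to\infty$, of stabilizing the intervals $W_\ell:=\bigcup_{|k|\leq\ell}B_k$ with killing outside, so the configuration fixates if and only if the corresponding odometer at a fixed site stays bounded as $\ell\to\infty$. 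Inside $W_\ell$ the Abelian property lets us topple block by block: each block, carrying its copy of $\eta$ together with whatever particles its two neighbours have fed into it, releases some particles across its boundary, which we pass on to the adjacent blocks and reprocess. Since every block releases at most $\varepsilon n$ particles with probability at least $1-\varepsilon$, one controls, uniformly in $\ell$, the total influx received by a fixed block and hence the odometer at a fixed site; letting $\ell\to\infty$ yields fixation, and the contradiction with Theorem~\ref{thm-phase-transition} proves the lemma.

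The delicate step is the last one: turning ``each block releases at most $\varepsilon n$ particles'' into ``the infinite system fixates.'' The danger is that a small influx into a block whose stable content is slightly supercritical could trigger a macroscopic avalanche there, which would then cascade through the neighbouring blocks. Ruling this out seems to require a multi-scale renormalization: one declares a block \emph{good} when it absorbs any sublinear influx while still releasing few particles, shows that good blocks occur with probability close to~$1$ uniformly, and uses a long run of good blocks around a site to confine the activity. Quantifying how a good block damps a sublinear influx, and controlling the dependence created by the boundary particles shared between adjacent blocks, is the technical heart of the argument --- presumably where the one-dimensional structure and the estimates behind Lemma~\ref{lemma-RT} are used in~\cite{For24}.
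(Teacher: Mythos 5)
First, note that the paper does not prove Lemma~\ref{lemma-fraction-exits} at all: it is imported as an external ingredient from~\cite{For24} (see Section~\ref{sec-fraction}), so there is no internal proof to compare yours against. Judged on its own, your proposal is an honest and plausible high-level strategy --- reduce to the activity statement of Theorem~\ref{thm-phase-transition} by tiling $\Z$ with the putative counterexample configuration and randomizing the phase --- and the preliminary reductions (negating the quantifiers, forcing $n\to\infty$ via monotonicity over the finitely many minimal configurations, ergodicity of the randomly shifted periodic configuration) are all fine.

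The genuine gap is the one you yourself flag: deducing fixation of the tiled infinite system from the hypothesis that a \emph{single isolated} block releases at most $\varepsilon n$ particles with probability $1-\varepsilon$. This is not a technical detail to be outsourced to a renormalization; as stated, the renormalization cannot even be initialized. The contradiction hypothesis only controls $M_n$ for the configuration $\eta$ stabilized in isolation with killing at its own boundary; it gives no information whatsoever about how the block responds to particles injected from its neighbours. Since the block carries density $\rho>\rc$ and is therefore supercritical, the generic expectation is precisely that a small influx triggers a macroscopic response, and the ``good block absorbs sublinear influx while releasing few particles'' property you would need is essentially a strengthening of (the negation of) the very statement under proof --- so the argument as sketched is close to circular. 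One also cannot simply stabilize block by block and then force the escaped particles out of the large box with acceptable topplings, because forcing them through intermediate blocks wakes sleeping particles there, so the procedure does not terminate in a stable configuration. Closing this step requires a genuinely new idea (this is the content of~\cite{For24}, which also exploits the one-dimensional structure), and until it is supplied the proof is incomplete.
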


Our proofs of Theorems~\ref{thm-dd} and~\ref{thm-hockey} using these
two results are presented in Sections~\ref{sec-dd} and~\ref{sec-hockey}.
Note that Theorem~\ref{thm-dd} (along with a monotonicity property
given by Lemma~\ref{lemma-mon-active} below) implies a more precise
version of Lemma~\ref{lemma-fraction-exits}, namely that for
every~$\varepsilon<\rho-\rc$ we
have~$\inf_{\|\eta\|\geq\rho n}
\PP_\eta(M_n>\varepsilon n)\to 1$ as~$n\to\infty$, where the infimum
is taken over all the initial configurations~$\eta:V_n\to\N$ with at
least~$\rho n$ particles, all active.

Then, in Section~\ref{sec-ps} we present the proof of
Theorem~\ref{thm-ps}, about the point source case.
For the outer bound, we rely on another result of~\cite{For24},
which
shows that if only few particles jump out of a segment, then with high
probability no particle leaves a slightly larger segment:

\begin{lemma}
\label{lemma-NML}
For every~$n\geq 1$, for every deterministic initial
configuration~$\eta:V_n\to\N$, for
any~$i,\,j\in\N$ we have
\[
\PP\big(A(\eta)\subset\{1-2j,\,\ldots,\,n+2j\}\big)
\ \geq\ 
\PP_\eta\big(M_n\leq i\big)
\times
\PP\big(G_1+\cdots+G_i\leq j\big)
\,,
\]
where~$A(\eta)$ denotes the set of sites which are visited
during the stabilization of~$\eta$ in~$\Z$ and~$G_1,\,\ldots,\,G_i$
are i.i.d.\ geometric variables with
parameter~$\lambda/(1+\lambda)$.
\end{lemma}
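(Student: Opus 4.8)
The plan is to use the Abelian property to run the stabilization of $\eta$ in $\Z$ in two stages, to control separately how far the second stage reaches to the left and to the right of $V_n$, and then to assemble the two one‑sided controls into the stated product by splitting a ``budget'' of geometric variables. For the first stage one stabilizes $\eta$ inside $V_n$ alone, toppling only the sites of $V_n$ until none is active. By the Abelian property this is a legitimate initial segment of a stabilization of $\eta$ in $\Z$, and the topplings it performs coincide with those of the stabilization of $\eta$ in $V_n$ with killing at the boundary; in particular the number of particles that cross the boundary of $V_n$ during this stage is exactly $M_n$. Write $L$ for the number crossing the left boundary (now sitting at site $0$) and $R=M_n-L$ for those crossing to the right (now at site $n+1$); the configuration left inside $V_n$ is stable. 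All the randomness consumed so far lives in the instruction stacks attached to the sites of $V_n$, hence is independent of the stacks at sites~$\leq 0$ and at sites~$\geq n+1$.

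The second stage deals with the $M_n$ particles now outside $V_n$. Consider the left side, the right side being symmetric. One would process the particles at site~$0$ one at a time, following each of them (and the topplings it triggers if it wanders back into $V_n$) until it rests at a site that will not be disturbed again. Whenever such a particle is alone on a site~$\leq 0$, the number of jumps it makes before the next ``sleep'' instruction at that site is read is a geometric variable of parameter~$\lambda/(1+\lambda)$, drawn from a stack outside $V_n$ and hence independent of the first stage. Tracking the leftmost visited site, the goal is to show that after all particles originating at site~$0$ have been processed the left frontier has moved at most $2(G_1+\cdots+G_L)$ sites past the left end of $V_n$, where $G_1,\ldots,G_L$ are i.i.d.\ geometric of parameter~$\lambda/(1+\lambda)$, the factor~$2$ being comfortable slack absorbing both the gap between the range and the length of a walk and the effect of particles that fall asleep and are later reactivated. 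Then, on $\{M_n\leq i\}$, the condition $G_1+\cdots+G_L\leq j$ already forces $A(\eta)\cap\{\,\ldots,-1,0\,\}\subset\{1-2j,\ldots,0\}$, and symmetrically a disjoint batch of at most $R$ further fresh geometric variables summing to $\leq j$ forces $A(\eta)\cap\{\,n+1,n+2,\ldots\,\}\subset\{n+1,\ldots,n+2j\}$.

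To conclude, one enlarges the two batches into a single family $G_1,\ldots,G_i$ of i.i.d.\ geometric variables of parameter~$\lambda/(1+\lambda)$ read from the stacks outside $V_n$ (only the first $M_n\leq i$ being actually used). Since every $G_k\geq 0$, the event $\{G_1+\cdots+G_i\leq j\}$ is contained in the intersection of the two partial‑sum events above, so that $\{M_n\leq i\}\cap\{G_1+\cdots+G_i\leq j\}\subset\{A(\eta)\subset\{1-2j,\ldots,n+2j\}\}$. As $\{M_n\leq i\}$ is measurable with respect to the stacks inside $V_n$ while $\{G_1+\cdots+G_i\leq j\}$ is measurable with respect to the stacks outside, the two events are independent, and the claimed inequality follows.

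The delicate point, on which the whole argument hinges, is the displacement estimate of the second stage: a particle that has left $V_n$ may re‑enter it, reactivate a sleeping particle and set off a fresh cascade pushing yet more particles out, so ``processing the escaped particles one at a time'' must be made rigorous and the leftward displacement must be shown to be genuinely dominated by a sum of independent geometric variables in spite of this feedback. This is where one must use the one‑dimensional geometry carefully to rule out that re‑entries inflate the count beyond what $G_1,\ldots,G_{M_n}$ already accounts for.
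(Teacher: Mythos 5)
This lemma is not proved in the paper at all: it is imported as an external ingredient from~\cite{For24} (see Section~\ref{sec-fraction}), so there is no internal proof to compare against, and your attempt must be judged on its own. Its overall scaffolding is reasonable and matches how such ``no man's land'' estimates are typically organised: stabilize $V_n$ first by abelianness, note that $\{M_n\le i\}$ depends only on the stacks inside $V_n$ while the geometric variables are read from the stacks outside, and split the budget $G_1+\cdots+G_i\le j$ between the two sides using non-negativity. Those parts are correct.

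However, the step you yourself flag as ``the delicate point'' is not a detail to be made rigorous later --- it is the entire content of the lemma, and as written it has a genuine hole. Two concrete problems. First, the count of particles to be settled outside $V_n$ is not stable under your second stage: a particle that re-enters $V_n$ can wake sleepers and trigger a cascade that ejects \emph{new} particles, so the number of particles that ultimately rest outside $V_n$ in the full-line stabilization is not a priori bounded by $M_n$ (the killed-boundary count from stage~1); your batches of size $L$ and $R$ with $L+R=M_n$ may simply not cover all the escapers. Second, the identification of each particle's contribution with a single geometric variable $G_k$ is not well defined: with acceptable topplings you cannot choose the direction of a jump, only which site to topple, so a particle at site $0$ may be driven back into $V_n$ by its instructions; a sleep instruction only takes effect when the particle is alone; and a particle that has fallen asleep can be reactivated by the next one, consuming further instructions at sites you have already ``charged''. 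The assertion that the left frontier advances by at most $2(G_1+\cdots+G_L)$ ``with comfortable slack'' is exactly the inequality the lemma asks you to prove, and nothing in the proposal establishes it. A correct argument has to exhibit an explicit acceptable stabilizing sequence whose support is contained in $\{1-2j,\ldots,n+2j\}$ on the good event and then invoke Lemma~\ref{lemma-monotonicity} to dominate the legal odometer --- which is what~\cite{For24} does; until the displacement bound is actually carried out in that framework, the proof is incomplete.
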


\subsection{Abelian property and monotonicity}
\label{sec-abelian}

The Abelian property is a central tool to study Activated Random
Walks.
It comes with a graphical representation of this kind of interacting
particles system, usually called the Diaconis and Fulton
representation~\cite{DF91,RS12}.

For every site~$x\in\Z$ consider an infinite
sequence~$(\tau_{x,j})_{j\geq 1}$
of ``instructions'', where each instruction~$\tau_{x,j}$ can either be
a ``sleep instruction'' or a ``jump instruction'' to some site~$y\in\Z$.
Let~$\tau=(\tau_{x,j})_{x\in\Z,j\geq 1}$ be such an array of
instructions, let~$\eta:\Z\to\N\cup\{\s\}$ be a particle
configuration, and let~$h:\Z\to\N$ be an array called odometer, which
counts how many instructions of~$\tau$ have already been used at
each site.
Recall that a site~$x\in\Z$ is called unstable in~$\eta$ if there is
at least one active particle at~$x$.
If~$x\in\Z$ is unstable, we say that it is legal to topple the
site~$x$.
Toppling the site~$x$ means applying the next
instruction from the array~$\tau$ at~$x$, namely~$\tau_{x,\,h(x)+1}$,
to the configuration~$\eta$.
If this instruction is a jump instruction to some site~$y\in\Z$, then
we make one particle jump from~$x$ to~$y$, waking up the sleeping
particle at~$y$ if any.
If this instruction is a sleep instruction, then the particle at~$x$
falls asleep if~$\eta(x)=1$, and nothing happens if~$\eta(x)\geq 2$.
This gives a new configuration~$\eta'$ and a new
odometer~$h'=h+\delta_x$.

If~$\alpha=(x_1,\,\ldots,\,x_k)$ is a finite sequence of sites
of~$\Z$, we say
that~$\alpha$ is a legal toppling sequence for~$\eta$ if it is legal to topple
these sites in this order.
The odometer of a toppling sequence~$\alpha$, which counts how many
times each site appears in~$\alpha$, is defined
as~$m_\alpha=\delta_{x_1}+\cdots+\delta_{x_k}$.

For a fixed configuration~$\eta$ and a fixed array of
instructions~$\tau$, for every~$V\subset\Z$ we can define the
stabilization odometer of~$V$, which is given by
\[
m_{V,\,\eta}
\ =\ 
\sup_{\alpha\subset V,\ \alpha\text{ legal for }\eta}\,m_\alpha
\,,
\]
where the notation~$\alpha\subset V$ simply means that all the sites
of~$\alpha$ belong to~$V$.

Another useful notion is that of acceptable topplings.
We say that it is acceptable to topple a site~$x$ if~$\eta$ contains at
least one particle at~$x$, which may be sleeping.
When we perform an acceptable toppling at a site which contains a
sleeping particle, we first wake it up.
We say that a sequence of topplings~$\alpha=(x_1,\,\ldots,\,x_k)$ is
acceptable if it is acceptable to topple these sites in this order.

We say that an acceptable sequence of topplings stabilizes~$\eta$
in~$V$ if, in the final configuration obtained after performing these
topplings, all the sites of~$V$ are stable.

For every fixed~$\eta$ and~$\tau$, we have the following least action
principle, which compares acceptable and legal sequences of topplings
and may also be called monotonicity with respect to
enforced activation:

\begin{lemma}[Lemma~2.1 in~\cite{Rolla20}]
\label{lemma-monotonicity}
For every~$V\subset\Z$, if~$\alpha$ is an acceptable sequence of
topplings that
stabilizes~$\eta$ in~$V$, and~$\beta\subset V$ is a legal sequence of
topplings for~$\eta$, then~$m_\alpha\geq m_\beta$.
Thus, if~$\alpha$ is an acceptable sequence of topplings that
stabilizes~$\eta$ in~$V$, then~$m_\alpha\geq m_{V,\,\eta}$.
\end{lemma}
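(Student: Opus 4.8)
The plan is to prove the first assertion by induction on the length~$k$ of the legal toppling sequence~$\beta=(x_1,\dots,x_k)$, and then to deduce the second assertion simply by taking the supremum over all legal~$\beta\subset V$ in the definition of~$m_{V,\eta}$. This is the usual ``least action principle'' for Abelian networks, and the whole difficulty is concentrated in one auxiliary commutation statement, which I would establish first.

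That auxiliary statement is a \emph{local abelian property} for acceptable topplings: if~$x\neq y$ and, in a configuration~$\eta$ with odometer~$h$, it is acceptable to topple~$x$ and acceptable to topple~$y$, then after toppling~$x$ it is still acceptable to topple~$y$ (a toppling at~$x$ never removes a particle from~$y$, it can only add one there or wake the sleeper), and the configuration and odometer obtained by toppling~$x$ then~$y$ coincide with those obtained by toppling~$y$ then~$x$. The key point is that, since~$x\neq y$, the instruction used at~$x$ is~$\tau_{x,\,h(x)+1}$ in either order, and likewise at~$y$, so the two elementary moves (``remove one particle and send it somewhere'', or ``fall asleep'', each combined with the wake-up-on-arrival rule) act on disjoint enough data that they commute. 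I would check this by a short case analysis, the only delicate cases being when the instruction at one of the two sites is a jump onto the (possibly sleeping) particle at the other site; the convention of waking a sleeping particle before applying its own instruction makes these cases work out. From this, a bubble-sort argument yields the form I actually need: in any acceptable sequence~$\alpha$ one may move the first occurrence of any site~$z$ that is unstable in~$\eta$ to the front, producing a new acceptable sequence~$\alpha^{\ast}=(z,\alpha')$ with the same final configuration and the same odometer~$m_{\alpha^{\ast}}=m_\alpha$.

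With this tool the induction runs as follows. The case~$k=0$ is trivial. For~$k\geq 1$, since~$\beta$ is legal for~$\eta$ the site~$x_1$ is unstable, i.e.\ it carries an active particle, so toppling it legally or acceptably is the same operation and yields a configuration~$\eta_1$. I would first argue that~$\alpha$ topples~$x_1$ at least once: otherwise, throughout the execution of~$\alpha$ the number of active particles at~$x_1$ could only increase, since topplings at other sites never remove a particle from~$x_1$, contradicting that~$\alpha$ stabilizes~$\eta$. Applying the reordering statement to the first occurrence of~$x_1$, I obtain an acceptable sequence~$\alpha^{\ast}=(x_1,\alpha')$ stabilizing~$\eta$ in~$V$ with~$m_{\alpha^{\ast}}=m_\alpha$; then~$\alpha'$ is an acceptable sequence stabilizing~$\eta_1$ in~$V$, while~$\beta'=(x_2,\dots,x_k)$ is a legal sequence for~$\eta_1$ of length~$k-1$ with all sites in~$V$. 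The induction hypothesis gives~$m_{\beta'}\leq m_{\alpha'}$, hence~$m_\beta=\delta_{x_1}+m_{\beta'}\leq\delta_{x_1}+m_{\alpha'}=m_{\alpha^{\ast}}=m_\alpha$, which closes the induction; taking the supremum over legal~$\beta\subset V$ then gives~$m_\alpha\geq m_{V,\eta}$.

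The main obstacle I anticipate is the local abelian property, and more precisely the bookkeeping of the sleeping state~$\s$ under acceptable topplings; once that case analysis is pinned down, the bubble-sort reordering and the induction itself are routine.
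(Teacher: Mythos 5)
Your proof is correct. Note first that the paper does not prove this lemma at all: it is imported verbatim as Lemma~2.1 of the cited survey of Rolla, so there is no internal proof to compare against. Your argument is a valid self-contained proof, but it takes a different route from the standard one in the literature. The usual proof is a minimal-counterexample counting argument: assuming $m_\beta\not\leq m_\alpha$, one takes the first index $j$ at which the partial odometer of $\beta$ exceeds $m_\alpha$ at the site $z=y_j$, and compares particle counts at $z$ after $\alpha$ and after $(y_1,\dots,y_{j-1})$ — same instructions consumed at $z$, at least as many arrivals from neighbours under $\alpha$ — to conclude that $z$ is still unstable after $\alpha$, a contradiction. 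Your route instead goes through a local commutation property of acceptable topplings plus a bubble-sort reordering and an induction on the length of $\beta$; this is essentially the ``abelian network'' style of proof. Both work; the counting proof avoids any commutation lemma, while yours isolates the abelian property as a reusable tool (and in fact also re-derives the uniqueness of the stabilized configuration mentioned after the lemma). Your case analysis for the commutation is the only place where care is genuinely needed, and it does go through: besides the convention you cite (an acceptable toppling first wakes the sleeper at its own site), the cases where a jump instruction at $x$ points to $y$ also use that an arriving particle instantaneously wakes a sleeping particle and that a sleep instruction is ineffective at a site carrying two or more particles; with these three rules every order of the two topplings yields the same particle count and the same active/sleeping status at each site. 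One small point worth making explicit in the bubble-sort step: when you swap the first occurrence of $z$ past its predecessor $w$, the hypothesis that $z$ is acceptable at that moment is guaranteed precisely because $z$ is unstable in $\eta$ and has not yet been toppled, so it has carried a particle throughout — which is the same observation you use to show that $\alpha$ must topple $x_1$ at least once.
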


This entails in particular that if~$\alpha$ and~$\beta$ are two legal
sequences of topplings in~$V$ which stabilize~$\eta$ in~$V$,
then~$m_\alpha=m_\beta=m_{V,\,\eta}$ (hence the name stabilization
odometer), and it is not hard to convince
oneself that the resulting final configurations are also equal.
This last equation is known as the abelian property,
which allows us to choose
whatever order to perform the topplings, as soon as they are legal, or
acceptable if we only look for upper bounds on the stabilization
odometer.

For a given finite set~$V\subset\Z$, a configuration~$\eta:V\to\N\cup\{\s\}$
and an array~$\tau=(\tau_{x,j})_{x\in V,\,j\geq 1}$ we
define~$\Stab_V(\eta,\,\tau)$ as the number of (sleeping) particles
which remain in~$V$ after applying any legal sequence of topplings
in~$V$ which
stabilizes~$\eta$ in~$V$ (so that this corresponds to ignoring
particles once they jump out of~$V$).

To relate this construction to the dynamics of Activated Random Walks,
one needs to make the array of instructions random.
Recalling that we fixed throughout this article a sleep
rate~$\lambda>0$ and~$p\in(0,1)$ a probability to jump to the left,
we consider i.i.d.\ stacks of instructions, where each instruction is
a sleep instruction with
probability~$\lambda/(1+\lambda)$, a jump instruction to the left
neighbouring site with
probability~\smash{$p/(1+\lambda)$}, and a jump instruction to the right with
probability~$(1-p)/(1+\lambda)$.
Throughout the article probabilities are all denoted by~$\PP$,
which refers most of the time to this distribution on the arrays,
but that we also abusively use when there are other random elements
than just the array~$\tau$ (for example the initial configuration).

Let us conclude this section with the following monotonicity property, which is a
consequence of
Lemma~\ref{lemma-monotonicity}:

\begin{lemma}
\label{lemma-mon-active}
Let~$V$ be a finite subset of~$\Z$ and let~$\eta,\,\xi:V\to\N$ be two
deterministic
configurations of particles on~$V$ which contain only active
particles and which are such that~$\eta\geq\xi$.
Let~$\tau$ be a random array of instructions with
distribution~$\PP$.
Then~$\Stab_V(\eta,\,\tau)$ stochastically
dominates~$\Stab_V(\xi,\,\tau)$.
\end{lemma}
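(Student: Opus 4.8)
\emph{Strategy.} The plan is to first reduce to adding a single particle, and then to couple the two stabilizations so that the one carrying the extra particle never ends up with fewer sleeping particles.

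\emph{Reduction.} Stochastic domination is transitive, and since $\eta\ge\xi$ with both configurations active, $\eta$ is obtained from $\xi$ by adding active particles one at a time, each intermediate configuration remaining active. Hence it suffices to treat $\eta=\xi+\delta_x$ for a single site $x\in V_n$: I must exhibit a coupling of two arrays $\tau,\tilde\tau$, each of law $\PP$, under which the stabilization of $\xi+\delta_x$ driven by $\tau$ leaves at least as many sleeping particles in $V_n$ as the stabilization of $\xi$ driven by $\tilde\tau$.

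\emph{The coupling.} By the abelian property (Lemma~\ref{lemma-monotonicity}) I may fix the order of topplings. First I let the added particle at $x$ move alone --- toppling only the site it occupies and reading instructions off the stacks of $\tau$ --- until it either (A)~exits $V_n$, or (B)~receives a sleep instruction while alone, necessarily at a site $s_\star\in V_n$ that was empty in $\xi$. Until this walk ends no particle of $\xi$ has toppled and no sleeping particle has appeared, so this really is a single walker on a frozen background; when it ends, the configuration equals $\xi$ in case~A, or $\xi$ with one extra sleeping particle at $s_\star$ in case~B, and the not-yet-used instructions are still i.i.d.\ of law $\PP$ by the strong Markov (fresh-stack) property. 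In case~A the walker has produced exactly one exit and the configuration is back to $\xi$; driving both the continuation and an auxiliary stabilization of $\xi$ by the remaining fresh instructions (these form $\tilde\tau$, after also deleting from $\tau$ the finitely many instructions the walker consumed) identifies the two systems from then on, so they finish with the same number of sleeping particles. In case~B I am left to compare the stabilization of ``$\xi$ plus a sleeping particle at an empty site'' with that of $\xi$, i.e.\ to show that inserting such a particle cannot decrease the final count; the natural attempt is to keep stabilizing $\xi$ and, each time an active particle is about to topple onto $s_\star$, to pause, reactivate the sleeper, and rerun the single-walker analysis on the reactivated particle as the new ``excess''. Since stabilization of a finite segment terminates almost surely one expects only finitely many such episodes, at the end of which the excess particle has either left $V_n$ (the systems then coincide) or is asleep (the system with the extra particle then carries exactly one more sleeping particle); either way $\Stab_{V_n}(\xi+\delta_x,\tau)\ge\Stab_{V_n}(\xi,\tilde\tau)$ along the coupling.

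\emph{Main obstacle.} The whole difficulty sits in case~B, in the reactivation step. When the excess particle wakes a sleeping particle of $\xi$ there are briefly two active particles on a site where the $\xi$ system holds a single (sleeping, hence stable) one, a mismatch that no single further toppling removes; and returning one of the two particles to the $\xi$ system can disturb the rest of its stabilization. Making this rigorous requires the now-standard but careful ARW coupling bookkeeping, in which the discrepancy between the two systems is tracked as being, at each instant, either one surplus particle or an ``active-versus-asleep'' mismatch localized at a single site, propagated consistently through topplings; and it requires checking that the array obtained from $\tau$ by deleting the instructions consumed by the excess particle is again i.i.d.\ of law $\PP$, which is a stopping-time argument. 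Modulo this bookkeeping, the abelian property does the rest.
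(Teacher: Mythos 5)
Your reduction to $\eta=\xi+\delta_x$ and your treatment of case A are fine, but the proof is not complete: everything you defer in case B under ``modulo this bookkeeping'' is precisely the hard part, and you do not carry it out. Once the excess particle falls asleep at $s_\star$, the two systems can diverge in a way that is genuinely painful to track by a direct coupling: each time a $\xi$-particle topples onto $s_\star$ the sleeper wakes, the discrepancy turns into a new wandering excess particle, that particle may in turn wake further sleepers of the $\xi$-system before the $\xi$-system would have woken them, and one must argue that the discrepancy remains describable as a single surplus-or-mismatch and that the leftover instruction stacks stay i.i.d.\ after a random, configuration-dependent set of deletions. You correctly identify this as the main obstacle, but identifying it is not resolving it; as written, the argument in case B is a plan, not a proof.

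The paper's proof sidesteps case B entirely, and this is worth internalizing: instead of letting the extra particle evolve \emph{legally} (so that it may fall asleep), one forces it to walk all the way out of $V_n$ using \emph{acceptable} topplings, i.e.\ one keeps toppling its current site even after a sleep instruction, waking it up if necessary. Concatenating this forced walk with a legal stabilization of the remaining configuration $\xi$ (read from the unused, still i.i.d.\ instructions) yields an \emph{acceptable} sequence stabilizing $\xi+\delta_x$ that leaves a number of sleeping particles distributed as $\Stab_{V_n}(\xi,\tau)$. Lemma~\ref{lemma-monotonicity} then says the odometer of any acceptable stabilizing sequence dominates the legal stabilization odometer of $\xi+\delta_x$; since the number of particles expelled from $V_n$ is non-decreasing in the odometer, the legal stabilization of $\xi+\delta_x$ retains at least as many particles as this acceptable one, giving the stochastic domination in one line. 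In short, the acceptable-toppling formalism is exactly the tool designed to replace the discrepancy bookkeeping you were attempting; to repair your proof, either invoke it as the paper does, or actually construct the case-B coupling in full detail.
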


\begin{proof}
It is enough to consider the case~$\eta=\xi+\delta_x$ for a
certain~$x\in V$.
Then, this configuration~$\xi+\delta_x$ in~$V$ can be stabilized by
first forcing one particle to walk from~$x$ until it jumps out of~$V$,
using acceptable topplings, and then stabilizing the remaining
configuration~$\xi$ in~$V$.
This yields an acceptable sequence of topplings which
stabilizes~$\xi+\delta_x$ in~$V$ and which leaves a number of sleeping
particles in~$V$ which is equal to~$\Stab_V(\xi,\,\tau)$ in
distribution.
By virtue of Lemma~\ref{lemma-monotonicity}, this shows the desired
stochastic domination.
\end{proof}

\subsection{Some open questions about possible generalizations}
\label{sec-open}

Let us recall that our results
are limited to the one-dimensional
case, with nearest-neighbour jumps (as is the case of~\cite{HJJ1}
and~\cite{HJJ2} which only deal with symmetric jumps in one
dimension).
Not only is our proof of superadditivity very specific to dimension~1,
but also our proof of the upper bound~$\rs\leq\rc$ in
Section~\ref{sec-rs-leq-rc}, which uses
Lemma~\ref{lemma-fraction-exits} which is only known in dimension~1,
and also our proof of the outer bound in the ball conjecture in
Section~\ref{sec-outer}, which uses Lemma~\ref{lemma-NML} which is
also only proved in dimension~1.

Thus, a natural open question is the following: which of these results
also hold in higher dimension, on more general graphs or with more
general jump distributions?
In particular, in view of the crucial role played by the superadditivity property, it
would be particularly interesting to know whether a similar
superadditivity property still holds.
And, if this is the case, which results could one deduce from such a
superadditivity property?

One intuitive strategy to extend to other graphs would be to determine
whether it remains true that 
turning one site into an ejector seat 
stochastically decreases the number of sleeping particles remaining
after stabilization.
Or, more generally, is it true that on an oriented graph, diverting
one oriented edge to the sink (i.e., the edge leads directly out of
the graph) stochastically decreases the number of sleeping particles
remaining?
This would in particular imply the superadditivity property without
the~$+1$ defect.

Besides, let us cite another natural question: is there a simple way to
establish the exponential bound on the upper tail of~$S_n/n$?

More generally, let us highlight that many of the open problems presented
in~\cite{LS24} remain open, including the density conjecture in
dimension~$d\geq 2$.

\section{Superadditivity: proof of Proposition~\ref{prop-superadd}}
\label{sec-proof-superadd}

\subsection{The setting: two segments on both sides of the origin}

Let~$n,\,m\geq 1$, consider the segment~$V=\{-n,\,\ldots,\,+m\}$ and let~$\tau=(\tau_{x,j})_{x\in
V,\,j\geq 1}$ be a random array of instructions with the distribution
described in Section~\ref{sec-abelian}.
Let us write~$L=\{-n,\,\ldots,\,-1\}$
and~\smash{$R=\{1,\,\ldots,\,\}$}, so that~$V=L\cup\{0\}\cup R$.
Recalling the notation~$\Stab$ introduced in Section~\ref{sec-abelian},
which counts the number of particles left after stablizing a given
configuration, let us define
\begin{align*}
S_V
&\ =\ 
\Stab_V(\1_V,\,\tau)
\ \stackrel{d}{=}\ S_{n+m+1}
\,,
\\
S_L
&\ =\ 
\Stab_L(\1_L,\,\tau)
\ \stackrel{d}{=}\ S_{n}
\,,
\\
S_R
&\ =\ 
\Stab_R(\1_R,\,\tau)
\ \stackrel{d}{=}\ S_{m}
\,,
\end{align*}
where the equalities in distribution follow from the exact sampling
result given by Lemma~\ref{lemma-sampling}.
Notice that~$S_L$ and~$S_R$ are independent because they depend on
instructions of~$\tau$ on two disjoint sets of sites.
Thus, our goal is to show that~$S_V$ stochastically
dominates~$S_L+S_R$.
Note that there is no hope to show more than a
stochastic domination there, because it is not true
that~$S_V\geq S_L+S_R$ for all the possible realizations of the array of
instructions.

At this point, notice that~$S_L+S_R$ corresponds
to the number of particles which remain sleeping in the segment~$V$ if
in some sense the site~$0$ becomes a sink or, formulated differently,
if all instructions at this site~$0$ are changed into jump
instructions which make particles jump directly out of~$V$.
Then, the intuitive idea is that changing these instructions to make
them point out of~$V$ decreases, at least in distribution, the number
of particles which remain in~$V$.
Although intuitive, this assertion is less obvious than it seems, and
its proof below uses the trick to stabilize the segment from one side
to the other side, which is very specific to the
one-dimensional case.
Ideas about what happens in higher dimension are most welcome!

\subsection{Turning the origin into an ejector seat}

We now wish to study what happens when instructions at~$0$ are replaced, one
by one, with instructions jumping out of~$V$.
One important detail is that, instead of starting with the first
instruction, we choose to replace the instructions one by
one ``starting from infinity''.
This will turn out to be crucial in the proof.
Hence, for every~$k\geq 1$, we let~$\tau_k$ denote the array obtained from~$\tau$
by replacing all the instructions with number~$j\geq
k$ at the site~$0$ with a jump instruction pointing out of~$V$,
i.e., for example a jump instruction from~$0$ to~$m+1$.
We then consider the number of particles left sleeping in~$V$ when
stabilizing~$\1_V$ using these
modified arrays of instructions,
defining~$N_k=\Stab_V(\1_V,\,\tau_k)$ for every~$k\geq 1$.

On the one hand, since in~$\tau_1$ all the instructions at the
site~$0$ are jumps
directly to the exterior of~$V$, when using instructions
in~$\tau_1$ any particle which visits the site~$0$ will jump out
of~$V$ without visiting~$L\cup R$.
Therefore, we have~$N_1=S_L+S_R$.

On the other hand, if~$k$ is strictly larger than the number of
instructions used at~$0$ to stabilize~$\1_V$ in~$V$ with instructions read
from~$\tau$, then~$N_k=S_V$ because the modified instructions are not
used anyway.
Since this number of instructions used at~$0$ is almost surely
finite, we deduce that~$N_k$ almost surely converges to~$S_V$
when~$k\to\infty$.

Thus, the proof of the proposition will be complete if we show that
for every~$k\geq 1$ the variable~$N_{k+1}$ stochastically
dominates~$N_k$.
That is to say, we study the replacement of the~$k$-th instruction
at the site~$0$, in
a setting where all the instructions coming after this one are already
replaced with jumps to the exterior.

\subsection{Replacing one instruction with a jump to the
exterior}

Let~$k\geq 1$.
First, assume that~$\tau_{0,k}$ is a sleep
instruction.
If moreover this instruction is the last instruction
used at~$0$ when stabilizing~$\1_V$ using the array~$\tau_{k+1}$, then
we have~$N_{k+1}=N_k+1$ because the two resulting final configurations
only differ by the presence of a sleeping particle at~$0$.
Otherwise, if the stabilization of~$\1_V$ using~$\tau_{k+1}$ uses
strictly less or strictly more than~$k$ instructions at~$0$,
then~$N_{k+1}=N_k$.
Therefore, on the event~$\cA$ that~$\tau_{0,k}$ is a sleep instruction
we have~$N_{k+1}\geq N_k$.

Let now~$\cB$ be the event that~$\tau_{0,k}$ is a jump
instruction to the right, so that at this position in~$\tau_{k+1}$
there is a jump to the right while in~$\tau_k$ there is a direct jump out
of~$V$.
Let us consider the following procedure
to stabilize
the configuration~$\1_V$ in~$V$ using the instructions
in~$\tau_k$:

\begin{enumerate}
\item
During the first step, we always topple the leftmost unstable site
in~$V$, and we stop as soon as~$k$ instructions have been used at~$0$ or
there are no more active particles in~$V$ (if this happens before
using~$k$ instructions at~$0$).

\item
During the second step, we stabilize the remaining active particles
in~$V$ using whatever legal sequence of topplings in~$V$.
\end{enumerate}

Let~$\eta_1$ and~$h_1$ be the random configuration on~$V$ and the
random odometer obtained at the end of step~1, and
let~$\eta:V\to\N\cup\{\s\}$ and~$h:V\to\N$ be a fixed configuration
and a fixed odometer such that~$\PP(\cB_{\eta,h})>0$,
where~$\cB_{\eta,h}=\cB\cap\{\eta_1=\eta,\,h_1=h\}$.
We wish to show that, conditionally on this event~$\cB_{\eta,h}$, the
variable~$N_{k+1}$ stochastically dominates~$N_k$.

First, if~$h(0)<k$ then it means that, on the event~$\cB_{\eta,h}$,
step~1 stopped before using~$k$
instructions at~$0$, so the configuration~$\eta$ is stable in~$V$
and~$N_{k+1}=N_k$.

Assume now that~$h(0)=k$ (note that by definition of step~1 it is
impossible
that~$h(0)>k$).
This means that (on the event~$\cB_{\eta,h}$) the last instruction
of~$\tau_k$ used during step~1
was the~$k$-th instruction at~$0$, which is a jump to the exterior.
Because of the priority to the left, this entails that the
configuration~$\eta$ is stable in~$L$.
Besides, during step~2 all the remaining instructions at~$0$ are jumps
directly to the exterior, so that any particle which visits the
site~$0$ during
step~2 then jumps out of~$V$ without visiting~$L\cup R$.
Therefore, on the event~$\cB_{\eta,h}$ we have
\begin{equation}
\label{eqNk}
N_k
\ =\ 
\|\eta\|_L
+\Stab_R(\eta,\,\tau')
\,,
\end{equation}
where~$(\tau')_{x\in R,\,j\geq 1}$ is the array on the sites of~$R$
obtained from~$\tau$ by removing the
instructions below the odometer~$h$ and re-indexing the remaining
instructions, that is to say, for every~$x\in R$ and~\smash{$j\geq
1$} we have~\smash{$\tau'_{x,j}=\tau_{x,\,h(x)+j}$}.

Now, recall that~$N_{k+1}$ is the number of particles that we obtain
if we use instead the array~$\tau_{k+1}$.
On the event~$\cB$ this array has a jump to the right at position~$k$
at site~$0$, so that applying the same sequence of topplings that we
performed during step~1 but reading the instructions
from~$\tau_{k+1}$ we obtain the configuration~$\eta_1+\delta_1$ (one
particle landed on site~$1$ instead of jumping to the exterior).
Hence, on the event~$\cB_{\eta,h}$ it holds
\begin{equation}
\label{eqNk1}
N_{k+1}
\ =\ 
\|\eta\|_L
+\Stab_R(\eta+\delta_1,\,\tau')
\,,
\end{equation}
with the same array~$\tau'$ of remaining instructions.

We now wish to apply Lemma~\ref{lemma-mon-active} to show that,
conditionally on~$\cB_{\eta,h}$, the
variable~$\Stab_R(\eta+\delta_1,\,\tau')$ stochastically
dominates~$\Stab_R(\eta,\,\tau')$.

First, recall that this Lemma only applies with configurations where all particles
are active.
Yet, note that the priority rule to the left
ensures that during step~1 there can never be a sleeping
particle to the right of an active particle.
As a consequence, the configuration~$\eta$ does not contain any
sleeping particle in~$R$ (because the last instruction was used
at~$0$).
See Figure~\ref{fig-superadd} for an example.

\begin{figure}
\begin{center}
\begin{tikzpicture}[scale=0.66]
\draw (-4,1) node[scale=1.5]{$\rightarrow$}
(-4,2) node[scale=1.5]{$\s$}
(-4,3) node[scale=1.5]{$\leftarrow$}
(-4,4) node[scale=1.5]{$\s$}
(-3,1) node[scale=1.5]{$\leftarrow$}
(-3,2) node[scale=1.5]{$\s$}
(-3,3) node[scale=1.5]{$\rightarrow$}
(-3,4) node[scale=1.5]{$\leftarrow$}
(-2,1) node[scale=1.5]{$\s$}
(-2,2) node[scale=1.5]{$\leftarrow$}
(-2,3) node[scale=1.5]{$\rightarrow$}
(-2,4) node[scale=1.5]{$\s$}
(-1,1) node[scale=1.5]{$\leftarrow$}
(-1,2) node[scale=1.5]{$\rightarrow$}
(-1,3) node[scale=1.5]{$\rightarrow$}
(-1,4) node[scale=1.5]{$\rightarrow$}
(0,1) node[scale=1.5]{$\rightarrow$}
(0,2) node[scale=1.5]{$\rightarrow$}
(0,3) node[scale=1.5][red!80!black]{$\Uparrow$}
(0,4) node[scale=1.5][red!80!black]{$\Uparrow$}
(1,1) node[scale=1.5]{$\s$}
(1,2) node[scale=1.5]{$\leftarrow$}
(1,3) node[scale=1.5]{$\rightarrow$}
(1,4) node[scale=1.5]{$\leftarrow$}
(2,1) node[scale=1.5]{$\rightarrow$}
(2,2) node[scale=1.5]{$\leftarrow$}
(2,3) node[scale=1.5]{$\s$}
(2,4) node[scale=1.5]{$\leftarrow$}
(3,1) node[scale=1.5]{$\leftarrow$}
(3,2) node[scale=1.5]{$\rightarrow$}
(3,3) node[scale=1.5]{$\rightarrow$}
(3,4) node[scale=1.5]{$\s$}
(4,1) node[scale=1.5]{$\s$}
(4,2) node[scale=1.5]{$\s$}
(4,3) node[scale=1.5]{$\rightarrow$}
(4,4) node[scale=1.5]{$\leftarrow$};

\foreach \x in {(-4,1), (-4,2), (-3,1), (-3,2), (-2,1), (-2,2),
(-2,3), (-1,1), (-1,1), (-1,2), (0,1), (0,2), (1,1), (1,2), (2,1),
(2,2), (3,1)}
{
\draw \x node[blue]{\textbf{/}};
}

\draw[blue,dashed, ultra thick, cap=round] (-4.5,2.5) node[left]{$h_1$} -- (-2.5,2.5) -- (-2.5,3.5)
-- (-1.5,3.5) -- (-1.5,2.5) -- (2.5,2.5) -- (2.5,1.5) -- (3.5,1.5) --
(3.5,0.5) -- (4.5,0.5);

\draw[orange, very thick] (-.5,1.5) -- (-.5,2.5) -- (.5,2.5) -- (.5,1.5)
-- cycle;

\draw[green!50!black, very thick]
(-4,0) node[scale=1.5]{$\s$}
(-3,0) node[scale=1.5]{$\s$}
(-2,0) node[scale=1.5]{$0$}
(-1,0) node[scale=1.5]{$0$}
(0,0) node[scale=1.5]{$0$}
(1,0) node[scale=1.5]{$2$}
(2,0) node[scale=1.5]{$0$}
(3,0) node[scale=1.5]{$1$}
(4,0) node[scale=1.5]{$0$}
(-4.3,0) node[left]{$\eta_1+\delta_1:$}
(1,0) circle(.4);

\draw[very thick] (-4.5,-.5) -- node[midway, below]{$L$ (all sleeping)} (-.5,-.5)
(.5,-.5) -- node[midway, below]{$R$ (all active)} (4.5,-.5);
\draw[very thick,white] (-0.3,-.5) -- node[midway, below,black]{$0$}
(0.3,-.5);

\draw(1.5,5.5) node[purple]{instructions
to make the extra particle};
\draw[purple, dashed, ultra thick, cap=round]
(-.5,2.6) -- (2.5,2.6) -- (2.5,4.5) --node[midway,above]{
exit with acceptable topplings}  (.5,4.5)
--(.5,3.5) --
(-.5,3.5) -- cycle;

\draw (0,-2) node{Outcome of step 1 using the array~$\tau_3$};

\draw[thick,->,green!50!black] (4.6,1.6) node[right]{extra particle} --
(1.4,.3);

\begin{scope}[xshift=380]
\draw[orange,->,thick] (-.6,5) node[above]{instruction~$\tau_{0,2}$
replaced with an ejector seat} -- (-.3,2.6);

\draw (-4,1) node[scale=1.5]{$\rightarrow$}
(-4,2) node[scale=1.5]{$\s$}
(-4,3) node[scale=1.5]{$\leftarrow$}
(-4,4) node[scale=1.5]{$\s$}
(-3,1) node[scale=1.5]{$\leftarrow$}
(-3,2) node[scale=1.5]{$\s$}
(-3,3) node[scale=1.5]{$\rightarrow$}
(-3,4) node[scale=1.5]{$\leftarrow$}
(-2,1) node[scale=1.5]{$\s$}
(-2,2) node[scale=1.5]{$\leftarrow$}
(-2,3) node[scale=1.5]{$\rightarrow$}
(-2,4) node[scale=1.5]{$\s$}
(-1,1) node[scale=1.5]{$\leftarrow$}
(-1,2) node[scale=1.5]{$\rightarrow$}
(-1,3) node[scale=1.5]{$\rightarrow$}
(-1,4) node[scale=1.5]{$\rightarrow$}
(0,1) node[scale=1.5]{$\rightarrow$}
(0,2) node[scale=1.5][red!80!black]{$\Uparrow$}
(0,3) node[scale=1.5][red!80!black]{$\Uparrow$}
(0,4) node[scale=1.5][red!80!black]{$\Uparrow$}
(1,1) node[scale=1.5]{$\s$}
(1,2) node[scale=1.5]{$\leftarrow$}
(1,3) node[scale=1.5]{$\rightarrow$}
(1,4) node[scale=1.5]{$\leftarrow$}
(2,1) node[scale=1.5]{$\rightarrow$}
(2,2) node[scale=1.5]{$\leftarrow$}
(2,3) node[scale=1.5]{$\s$}
(2,4) node[scale=1.5]{$\leftarrow$}
(3,1) node[scale=1.5]{$\leftarrow$}
(3,2) node[scale=1.5]{$\rightarrow$}
(3,3) node[scale=1.5]{$\rightarrow$}
(3,4) node[scale=1.5]{$\s$}
(4,1) node[scale=1.5]{$\s$}
(4,2) node[scale=1.5]{$\s$}
(4,3) node[scale=1.5]{$\rightarrow$}
(4,4) node[scale=1.5]{$\leftarrow$};

\foreach \x in {(-4,1), (-4,2), (-3,1), (-3,2), (-2,1), (-2,2),
(-2,3), (-1,1), (-1,1), (-1,2), (0,1), (0,2), (1,1), (1,2), (2,1),
(2,2), (3,1)}
{
\draw \x node[blue]{\textbf{/}};
}

\draw[blue,dashed, ultra thick, cap=round] (-4.5,2.5) node[left]{$h_1$} -- (-2.5,2.5) -- (-2.5,3.5)
-- (-1.5,3.5) -- (-1.5,2.5) -- (2.5,2.5) -- (2.5,1.5) -- (3.5,1.5) --
(3.5,0.5) -- (4.5,0.5);

\draw[orange, very thick] (-.5,1.5) -- (-.5,2.5) -- (.5,2.5) -- (.5,1.5)
-- cycle;

\draw[green!50!black, very thick]
(-4,0) node[scale=1.5]{$\s$}
(-3,0) node[scale=1.5]{$\s$}
(-2,0) node[scale=1.5]{$0$}
(-1,0) node[scale=1.5]{$0$}
(0,0) node[scale=1.5]{$0$}
(1,0) node[scale=1.5]{$1$}
(2,0) node[scale=1.5]{$0$}
(3,0) node[scale=1.5]{$1$}
(4,0) node[scale=1.5]{$0$}
(-5,0) node{$\eta_1:$}
(1,0) circle(.4);

\draw[very thick] (-4.5,-.5) -- node[midway, below]{$L$ (all sleeping)} (-.5,-.5)
(.5,-.5) -- node[midway, below]{$R$ (all active)} (4.5,-.5);
\draw[very thick,white] (-0.3,-.5) -- node[midway, below,black]{$0$}
(0.3,-.5);

\draw (0,-2) node{Outcome of step 1 using the array~$\tau_2$};

\end{scope}

\end{tikzpicture}
\end{center}
\caption{
\label{fig-superadd}
In the proof of Proposition~\ref{prop-superadd}, possible situation of
the array of instructions and the configuration after step~1, when we
study the replacement of the second instruction at~$0$ with a jump
instruction to the exterior (``ejector seat'').
In the left case, there is an extra particle at site~$1$, which we can
force to exit with acceptable topplings, without waking up any
sleeping particle.
Double red arrows ($\Uparrow$) represent ejector seat instructions.
}
\end{figure}

Besides, the event~$\cB_{\eta,h}$ being measurable with respect to
the instructions~$(\tau_{x,j})_{x\in V,\,j\leq h(x)}$, it is
independent of the array~$\tau'$.
Therefore, conditionally on~$\cB_{\eta,h}$, the array~$\tau'$ has the
same distribution as~$\tau$.

Therefore, Lemma~\ref{lemma-mon-active} applies and allows us to
deduce that~$\Stab_R(\eta+\delta_1,\,\tau')$ dominates~\smash{$\Stab_R(\eta,\,\tau')$}.
Plugging this into~\eqref{eqNk} and~\eqref{eqNk1} it follows that,
conditionally on~$\cB_{\eta,h}$, the
variable~$N_{k+1}$ stochastically dominates~$N_k$.

This is true for every couple~$(\eta,\,h)$ such that~$\PP(\cB_{\eta,h})>0$,
and the
same argument works on the event~$\cC$ that
the instruction~$\tau_{0,k}$ is a jump to
the left (by considering instead the procedure which always topples the
rightmost active site during step~1), so we eventually conclude
that~$N_{k+1}$ stochastically dominates~$N_k$.

\section{Convergence for superadditive sequences: proof of
Lemma~\ref{lemma-cv}}
\label{sec-cv}

Let~$(X_n)_{n\geq 1}$ be a stochastically superadditive sequence of
non-negative variables.
First note that Fekete's subadditive lemma~\cite{Fekete23} entails
that~$\E X_n/n\to\rs$ as~$n\to\infty$, where~$\rs=\sup_{n\geq
1}\E X_n/n$.

\subsection{Exponential bound on the lower tail}

Let us start by directly proving the exponential
bound~\eqref{lower-bound} on the lower
tail of~$X_n/n$.
Let~$\rho<\rs$.
Let us fix~$N\geq 1$ such that~$\E X_N/N>\rho$.
Considering the normalized moment-generating function
\[
\Lambda\,:\,\theta\in\R_+
\ \longmapsto\ 
-
\frac 1 N
\ln\E\big[e^{-\theta X_N}\big]
\,,
\]
we have~$\Lambda(0)=0$ and~$\Lambda'(0)=\E X_N/N>\rho$.
Hence, we can find~$\theta>0$ such that~$\Lambda(\theta)>\rho\theta$.
Then, for every~$n\geq N$, using the superadditivity assumption we can
write
\[
\ln\E\big[e^{-\theta X_n}\big]
\ \leq\ 
\Big\lfloor\frac n N\Big\rfloor
\ln\E\big[e^{-\theta X_N}\big]
\ =\ 
\Big\lfloor\frac n N\Big\rfloor
\big(-N\Lambda(\theta)\big)
\ \stackrel{n\to\infty}{\sim}\ 
- n \Lambda(\theta)
\,.
\]
Therefore, if we choose~$c>0$ such
that~$\rho\theta+c<\Lambda(\theta)$, then
for~$n$ large enough we have
\[
\ln\E\big[e^{-\theta X_n}\big]
\ \leq\ 
-n(\rho\theta+c)
\,,
\]
which implies through a Chernoff inequality that
\[
\PP\bigg(\frac {X_n} n \leq \rho\bigg)
\ \leq\ 
\E\big[e^{-\theta X_n}\big]e^{\theta\rho n}
\ \leq\ 
e^{-cn}
\,,
\]
concluding the proof of~\eqref{lower-bound}.
Note that the above proof is more or less equivalent to using the
Gärtner-Ellis theorem (see for
example Theorem~2.3.6 in~\cite{DZ10}).

\subsection{Bound on the upper tail}

We now deduce the bound on the upper tail from the bound on the lower
tail.

Assume that~$\rs<\infty$, otherwise there is nothing to prove.
Fix~$\varepsilon>0$ and let us show that when~$n\to\infty$ we have~\smash{$\PP(X_n/n \geq\rs+\varepsilon)\to 0$}.

Let~$\delta>0$.
Take~$\varepsilon'=\varepsilon\delta/2$, and let~$c>0$ and~$n_0\geq 1$
be such that for~$\rho=\rs-\varepsilon'$ the bound~\eqref{lower-bound}
holds for every~$n\geq n_0$.
Then, for~$n\geq n_0$ we can write
\begin{align*}
\rs
\ \geq\ 
\E\bigg[\frac{X_n}n\bigg]
&\ \geq\ 
(\rs-\varepsilon')\,
\PP\bigg(\frac{X_n}n\geq \rs-\varepsilon'\bigg)
+(\varepsilon+\varepsilon')\,
\PP\bigg(\frac{X_n}n\geq \rs+\varepsilon\bigg)
\\
&\ \geq\ 
(\rs-\varepsilon')(1-e^{-cn})
+\varepsilon\,
\PP\bigg(\frac{X_n}n\geq \rs+\varepsilon\bigg)
\\
&\ \geq\ 
\rs-\varepsilon'-\rs e^{-cn}
+\varepsilon\,
\PP\bigg(\frac{X_n}n\geq \rs+\varepsilon\bigg)
\,,
\end{align*}
which implies that
\[
\PP\bigg(\frac{X_n}n\geq \rs+\varepsilon\bigg)
\ \leq\ 
\frac{\varepsilon'+\rs \,e^{-cn}}{\varepsilon}
\ =\ 
\frac\delta 2 +\frac{\rs}{\varepsilon}\,e^{-cn}
\ \leq\ 
\delta
\,,
\]
provided that~$n$ is large enough.
This concludes the proof of the convergence of~$X_n/n$ to~$\rs$ in
probability, thus concluding the proof of Lemma~\ref{lemma-cv}.

\section{Limit of the driven-dissipative chain: proof of
Theorem~\ref{thm-dd}}
\label{sec-dd}

We now show that~$S_n/n\to\rc$ in probability.
The superadditivy property given by
Proposition~\ref{prop-superadd}, along with its consequence given by Lemma~\ref{lemma-cv},
already ensures that~$S_n/n\to\rs$ in
probability, where~$\rs=\sup_{n\geq 1}\E S_n/n$.
Thus, to obtain Theorem~\ref{thm-dd} there only remains to show
that~$\rs=\rc$.

We treat separately the lower bound~$\rs\geq\rc$  and the upper
bound~$\rs\leq\rc$.
In both cases we rely on Lemma~\ref{lemma-sampling} which tells us
that~$S_n$ is the number of sleeping particles which remain in~$V_n$
after stabilization starting with one active particle per site.

\subsection{Lower bound}
\label{sec-dd-lower}

The lower bound is a quite direct consequence of
Lemma~\ref{lemma-RT}.
Let~$\rho<\rc$ and consider the following
procedure to stabilize the segment~$V_n$
starting with one active particle per site.

For each particle in the initial configuration, we draw an independent
Bernoulli variable with probability~$\rho$.
If we obtain~$0$, we force the particle to walk with acceptable
topplings until it jumps out of~$V_n$, and if we obtain~$1$ we leave
the particle at its starting point.
After this first step of the procedure, we obtain a configuration
which is i.i.d.\ with mean density~$\rho$, with all particles active.
Let~$N_n$ be the random number of particles in this configuration (it
is a binomial with parameters~$n$ and~$\rho$).

In the second step we simply perform legal topplings in whatever order
until a stable configuration is reached.
Let~$M_n$ be the number of particles which jump out of~$V_n$ during
this second step, so that the final number of sleeping particles
remaining in~$V_n$ after the two steps is equal to~$N_n-M_n$.
Since we performed acceptable topplings, the least action principle
given by Lemma~\ref{lemma-monotonicity} tells us that the odometer of the sequence that we
performed is above the legal stabilizing odometer.
The number of
particles jumping out of~$V_n$ being a non-decreasing function of the
odometer, we therefore have~$N_n-M_n\leq S_n$.
Besides, Lemma~\ref{lemma-RT} tells us that~$\E
M_n=o(n)$ as~$n\to\infty$, because~$\rho<\rc$.
Hence, we have
\[
\frac{\E S_n}{n}
\ \geq\ 
\frac{\E[N_n-M_n]}{n}
\ =\ 
\rho-\frac{\E M_n}{n}
\ \stackrel{n\to\infty}{\longrightarrow}\ 
\rho
\,,
\]
whence~$\rs\geq\rho$.
This being true for every~$\rho<\rc$, we get~$\rs\geq\rc$.

\subsection{Upper bound}
\label{sec-rs-leq-rc}

We now turn to the upper bound~$\rs\leq\rc$.
Let us assume by contradiction that~$\rs>\rc$.
Let~$\rho'\in(\rc,\,\rs)$, and let~$\varepsilon>0$ and~$c>0$ be given by
Lemma~\ref{lemma-fraction-exits}
applied with~$\rho'$, so that for every~$n\geq 1$, for every
deterministic initial configuration~$\eta:V_n\to\N$ with at
least~$\rho'
n$ particles, all active, 
with probability at least~$c$, at least~$\varepsilon n$ particles
exit.
Decreasing~$\varepsilon$ if necessary, we assume
that~$\rs-\varepsilon/2\geq\rho'$.
To shorten notation, let us define~$\delta=\varepsilon/2$.

Since~$S_n/n\to\rs$ in probability, we can take~$n$ large enough so
that~$\PP(S_n/n \leq\rs+\delta)\geq 1/2$.
Our aim is to deduce from this a lower bound on the probability
that~$S_n/n\leq\rs-\delta$, in order to contradict the fact that this
probability decays exponentially fast with~$n$.

To this end, a natural strategy is to start from one active
particle on each site and, during a first step, to perform legal topplings until less
than~$(\rs+\delta)n$ particles remain.
With probability at least~$1/2$ the number of particles indeed goes
below~$(\rs+\delta)n$ before stabilization.
We then would like to use
that, after this, if enough particles remain then at least~$2\delta n$
particles jump out with
probability at least~$c$.
But, to make this argument work, one must find a way to ensure that,
after the first step, when less than~$(\rs+\delta)n$ particles remain,
these particles are all active, otherwise the lower bound on the
probability that at least~$2\delta n$ more particles exit would not
apply.

Taking once again advantage of the one-dimensional setting, our idea
is that during the first stage we always topple the leftmost active
particle in the segment, until
enough particles jumped out from the left endpoint, and then repeat
this with the right endpoint (toppling the rightmost active particle).
Hence, to know how many particles we should expect to see jumping from each
endpoint, we need to translate the bound~$S_n/n\leq\rs+\delta$ into an
information on how many particles jump out on each side.

Let~$L_n$ and~$R_n$ denote the number
of particles which respectively
jump out of~$V_n$ from the left and from the right endpoint during
the stabilization of~$V_n$ starting with one active particle per site,
so that we have~\smash{$S_n=n-L_n-R_n$}.
Then, we can write
\[
\bigg\{
\frac{S_n}n\leq\rs+\delta\bigg\}
\ \subset\ 
\bigcup_{
\substack{0\,\leq\,\ell,\,r\,\leq\, n\\
n-\ell-r\,\leq\,(\rs+\delta)n}}
\mathcal{A}_{\ell,r}
\qquad\text{where}\quad
\mathcal{A}_{\ell,r}
\ =\ 
\big\{L_n\geq \ell,\,R_n\geq r\big\}
\,.
\]
Recalling now that the event on the left-hand side has probability at
least~$1/2$,
by a simple union bound we can
find~$\ell,\,r\in\{0,\,\ldots,\,n\}$ such
that~$n-\ell-r\leq(\rs+\delta)n$ and
\begin{equation}
\label{ProbaAlr}
\PP(\mathcal{A}_{\ell,r})
\ \geq\ 
\frac 1{2(n+1)^2}
\,.
\end{equation}

We now consider the following three-steps procedure to
stabilize~$V_n$ starting with one active particle per site:

\begin{enumerate}
\item During the first step, we always topple the leftmost active
particle in~$V_n$, until either~$\ell$ particles
jumped out by the left exit, or no active particle remains.

\item If the configuration is already stable after step~1 or if already
at least~$r$ particles jumped by the right exit during step~1, then we
do nothing during step~2.
Otherwise, step~2 consists in always toppling the rightmost active
particle in~$V_n$ until either no active particle remains, or a total
of~$r$ particles have jumped out by
the right exit during steps~1 and~2.

\item If the configuration is still not stable after step~2, during
the third and last step we perform legal topplings until the configuration
is stable in~$V_n$.
\end{enumerate}

On the event~$\mathcal{A}_{\ell,r}$, step~1 cannot stop before~$\ell$
particles jump out by the left exit.
This implies that the last toppling performed during step~1 is a
toppling on the leftmost site of~$V_n$, which entails that this site
contained an active particle just before the end of step~1.
Yet, note that during step~1 there can never be a sleeping particle
to the left of an active particle.
Hence on the event~$\mathcal{A}_{\ell,r}$ step~1 terminates with no sleeping
particle in~$V_n$ and with~$\ell$ particle having jumped by the left
exit.

For the same reasons, on the event~$\mathcal{A}_{\ell,r}$ step~2 necessarily
terminates with at least~$r$ particles having jumped out by the right
exit (in total during steps~1 and~2), and with no sleeping particles
in~$V_n$.

To sum up, on the event~$\mathcal{A}_{\ell,r}$, step~2 always
terminates with at most~$n-\ell-r\leq(\rs+\delta)n$ particles
remaining in the segment, and these particles are all active.
Thus, by definition of~$\varepsilon$, conditioned on the event~$\mathcal{A}_{\ell,r}\cap\{\text{at
least }(\rs-\delta)n\text{ particles remain after step~2}\}$, with
probability at least~$c$ at least~$\varepsilon n$ particles jump out during
step~3, leaving at most~$(\rs+\delta)n-\varepsilon n = (\rs-\delta)n$
particles at the end of
step~3.
Hence, we obtain
\[
\PP\bigg(\frac{S_n}n\leq\rs-\delta\bigg)
\ \geq\ 
c\,\PP\big(\mathcal{A}_{\ell,r}\big)
\ \geq\ 
\frac{c}{2(n+1)^2}
\,,
\]
using the bound~\eqref{ProbaAlr}.
This being true for every~$n$ large enough, we obtain a contradiction
with the fact that
this probability decays exponentially fast with~$n$ (as ensured by the
superadditivity property and its consequence, Lemma~\ref{lemma-cv}).
The proof by contradiction that~$\rs\leq\rc$ is thereby complete.

Note that the above proof takes advantage of the exponential
bound in order to compensate the entropic factor~$(n+1)^2$ coming from the
number of choices of~$\ell$ and~$r$.
Yet, it is also possible to obtain the result using only the convergence in probability
of~$S_n/n$.
To do so, impose~$\ell$ and~$r$ to be
multiples of~$\delta n$ and take~$r$ as a function of~$\ell$.
This adds an error of at
most~$\delta n$ (which we can afford by taking more margin, choosing
for example~$\delta=\varepsilon/3$
instead of~$\varepsilon/2$), and reduces the
number of possible couples~$(\ell,\,r)$ to a constant of the
order~$1/\delta^2$.
In Section~\ref{sec-ps}, we will use this entropy reduction technique
to obtain the ball conjecture.

\section{The hockey stick: proof of Theorem~\ref{thm-hockey}}
\label{sec-hockey}

We now prove the hockey stick conjecture, which in fact easily follows
from what we just did in Section~\ref{sec-dd}.

Note that, following the Abelian property, for every~$t\in\N$ the
variable~$Y_t$ is the number of particles which remain in~$V_n$ after
stabilization starting with an initial configuration consisting of~$t$
active particles placed independently and uniformly in~$V_n$.

\subsection{Upper bound for subcritical densities}

The upper bound for subcritical densities is trivial because by
definition of~$Y_t$ we always have~$Y_t\leq t$.

\subsection{Upper bound for supercritical densities}

Note that for every~$t\in\N$, it follows from the monotonicity
property given by Lemma~\ref{lemma-mon-active} that~$Y_t$ is
stochastically dominated by~$S_n$ (because starting from any initial
configuration with only active particles, one may first topple sites
with at least two particles until there are no more such sites, and
then one is left with a configuration with on each site,~$0$ or~$1$
active particle).

Therefore, for every~$\rho>\rc$ and for
every~$\varepsilon>0$ we have
\[
\PP\bigg(\frac{Y_{\lceil\rho n\rceil}}n>\rc+\varepsilon\bigg)
\ \leq\ 
\PP\bigg(\frac{S_n}n>\rc+\varepsilon\bigg)
\,,
\]
which tends to~$0$ as~$n\to\infty$ by Theorem~\ref{thm-dd}.

\subsection{Lower bound}

The lower bound follows the same line of proof as the lower bound
on~$S_n$ established in Section~\ref{sec-dd-lower}.
Let~\smash{$\rho>0$} and let~$\rho'<\min(\rho,\,\rc)$.
Let~$n\geq 1$ and let~$\eta$ be a random initial configuration
with~$\lceil\rho n\rceil$ active particles placed independently and
uniformly in~$V_n$.
Let~$\rho''$ be such that~$\rho'<\rho''<\min(\rho,\,\rc)$.
Then, one may couple this configuration~$\eta$ with an i.i.d.\
configuration~$\eta'$ with mean~$\rho''$ such
that~$\eta\geq\eta'$ with
probability tending to~$1$ as~$n\to\infty$.
The result then follows using Lemma~\ref{lemma-mon-active}
(monotonicity) and Lemma~\ref{lemma-RT} (starting with an i.i.d.\
subcritical configuration, the probability that a positive fraction of
the particles jumps out tends to~$0$).

\section{Growth of a ball: proof of Theorem~\ref{thm-ps}}
\label{sec-ps}

The proof of Theorem~\ref{thm-ps} is divided into an inner bound
(showing that it is unlikely that the aggregate is so small that it
hosts a supercritical
density of particles) and an outer bound (showing that it is unlikely
that the aggregate is so spread out that the density of particles
inside is subcritical).
The inner bound relies on the fact that for every~$\rho>\rho_c$ we
have~$\PP(S_n/n\geq\rho)\to 0$ as~$n\to\infty$ (according to
Theorem~\ref{thm-dd}), while the outer bound
relies on Lemma~\ref{lemma-RT}, along with Lemma~\ref{lemma-NML}.
Since these results do not give estimates on the speed at which the
probabilities tend to~$0$, in both the inner bound and the outer bound
we use ``entropy reduction tricks'' to avoid summing a too large
number of terms.

\subsection{Inner bound}

We start by proving the following result:

\begin{lemma}
\label{lemma-inner}
For every~$n,\,k\geq 1$, for every~$x\in V_n$, we have
\[
\PP(x+A_k\subset V_n)
\ \leq\ 
\PP(S_n\geq k)
\,.
\]
\end{lemma}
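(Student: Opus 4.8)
The plan is to relate the point-source aggregate $A_k$ on $\Z$ to the stabilization of $V_n$ with one active particle per site, using the Abelian property and acceptable topplings, exactly in the spirit of Lemma~\ref{lemma-mon-active}. Fix $n,k\geq 1$ and $x\in V_n$, and suppose we start with $k$ active particles at the origin of $\Z$ and stabilize on $\Z$; the visited set is $A_k$. On the event $\{x+A_k\subset V_n\}$, no particle ever leaves $V_n$ when we shift the origin to $x$, so stabilizing $k$ particles at $x$ on $\Z$ is the same as stabilizing them inside $V_n$ (with killing at the boundary), and produces the same final configuration with the same number of sleeping particles, call it $T_k$. So on this event $|A_k|$ equals the number of sites visited, and in particular the number of sleeping particles $T_k$ left inside $V_n$ is at least... no: the cleaner route is to compare $T_k$ with $S_n$.

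First I would argue that $k$ active particles at $x\in V_n$, stabilized in $V_n$ (killing at the boundary), leaves stochastically at most as many sleeping particles as the configuration $\1_{V_n}$ stabilized in $V_n$. This does \emph{not} follow from Lemma~\ref{lemma-mon-active} directly, since $k\delta_x \not\leq \1_{V_n}$ in general. Instead I would use the following acceptable-toppling trick: starting from $\1_{V_n}$, stabilize in $V_n$ using \emph{acceptable} topplings by first toppling, one at a time, every site of $V_n\setminus\{x\}$ once (forcing those particles to move; some go to $x$, some toward the boundary), and continuing acceptably until only particles sitting at $x$ remain active there — in effect routing some mass to $x$ and dissipating the rest. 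More carefully: one wants an acceptable toppling sequence for $\1_{V_n}$ that, at some intermediate stage, reaches a configuration dominating $k\delta_x$ and nothing else active, so that finishing the stabilization acceptably gives, in distribution, a number of sleeping particles that by Lemma~\ref{lemma-monotonicity} bounds $S_n$ from above...

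Let me instead take the simpler and correct comparison. On the event $\{x+A_k\subset V_n\}$, the stabilization of $k\delta_x$ can be carried out entirely within $V_n$ with no dissipation; call $\xi$ the resulting stable configuration on $V_n$ and $T_k$ its number of sleeping particles — note $T_k=k$ on this event since no particle is lost. Now take this stable $\xi$ and feed it, as an initial configuration together with additional mass, to recover $\1_{V_n}$: since $\xi$ has $k\le n$ particles placed somewhere in $V_n$, the configuration $\1_{V_n}$ is obtained from some active configuration $\eta\geq$ (the active version of $\xi$) only after we know $\xi\leq \1_{V_n}$ pointwise, which need not hold. The key realization is that we should run the argument through the odometer: stabilize $\1_{V_n}$ in $V_n$ by an acceptable sequence that first pushes the particle at each site $y\neq x$ along a path to the boundary \emph{unless} we want to keep it, arranging that the surviving active mass at $x$ at the end of this routing phase is at least $k$; then by Lemma~\ref{lemma-monotonicity} the legal odometer $m_{V_n,\1_{V_n}}$ is dominated by this acceptable odometer, and since the number of particles leaving $V_n$ is monotone in the odometer, $S_n = n - (\text{exits}) \geq$ the number of sleeping particles produced by acceptably stabilizing this $\geq k\delta_x$ active configuration; and \emph{that} number stochastically dominates the number of sleeping particles from stabilizing exactly $k\delta_x$, which on $\{x+A_k\subset V_n\}$ equals $k$ and whose visited set is $x+A_k$. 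Hence $\PP(x+A_k\subset V_n)\le \PP(S_n\ge k)$.

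The main obstacle is making the ``routing phase'' precise: one must exhibit, for every realization of $\tau$, an acceptable toppling sequence for $\1_{V_n}$ in $V_n$ whose intermediate configuration has at least $k$ active particles at $x$ and is reached without having used any instruction at $x$ yet (so that the remaining stabilization at $x$ reads the same instruction stack as the standalone stabilization of $k\delta_x$), and to check that the final sleeping count is then exactly $\mathrm{Stab}$-distributed as the standalone one on the good event. Concretely, for $y<x$ repeatedly topple the leftmost active site strictly left of $x$ — with nearest-neighbour jumps and priority-to-the-left this eventually sends, for each starting unit at $y$, either a particle into $x$ or a particle out of the left boundary, and crucially never touches the stack at $x$; symmetrically for $y>x$ using priority-to-the-right. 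After this phase the active mass at $x$ equals $1+(\#\text{particles routed in from the left})+(\#\text{from the right})\ge k$ precisely when enough mass is routed in, which is not automatic — so in fact one routes \emph{toward} $x$ as much as possible, giving active mass at $x$ equal to $n$ minus boundary losses, and then one must separately argue this is $\geq k$ with the right probability. This last point is exactly where $\{x+A_k\subset V_n\}$ enters: on that event, stabilizing $n$ active particles at $x$ in $V_n$ loses $n-k$ of them (since standalone it loses $n-k\cdot\mathbf 1$... ) — reconciling these counts is the delicate step, and I expect the cleanest writeup to compare, via Lemma~\ref{lemma-monotonicity} applied twice, the acceptable stabilization of $n\delta_x$ in $V_n$ with both $\1_{V_n}$ (giving $\geq S_n$ sleeping, i.e.\ $\leq$ in the bound direction) and with $k\delta_x$ on the good event (giving exactly the aggregate picture), so that $\PP(x+A_k\subset V_n) \le \PP(\mathrm{Stab}_{V_n}(n\delta_x,\tau)\ge k) \le \PP(S_n\ge k)$.
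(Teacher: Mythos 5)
There is a genuine gap. Your first inequality, $\PP(x+A_k\subset V_n)\le \PP(\Stab_{V_n}(n\delta_x,\tau)\ge k)$, is fine: on the good event the stabilization of $k\delta_x$ in $V_n$ coincides with the stabilization in $\Z$ and loses no particles, and $n\delta_x\ge k\delta_x$ so Lemma~\ref{lemma-mon-active} applies (the event is empty anyway when $k>n$, since $|A_k|\ge k$). But the second inequality, $\PP(\Stab_{V_n}(n\delta_x,\tau)\ge k)\le\PP(S_n\ge k)$, is precisely the hard point, and your sketch for it does not work. Lemma~\ref{lemma-monotonicity} compares legal and acceptable topplings \emph{of the same initial configuration}; to compare $\1_{V_n}$ with $n\delta_x$ you would need to reach $n\delta_x$ from $\1_{V_n}$ by acceptable topplings, which is impossible: acceptable topplings let you force a particle to consume its jump instructions, not to choose where it jumps, so your ``routing phase'' delivers only a random number $N\le n$ of particles to $x$ (the rest exit $V_n$), and, as you yourself concede, there is no reason to have $N\ge k$. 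Worse, the comparison then points the wrong way: any acceptable stabilization of $\1_{V_n}$ leaves \emph{at most} $S_n$ particles, which upper-bounds $\Stab(N\delta_x)$ by $S_n$ only for the random $N$ actually obtained, not for $n\delta_x$ or $k\delta_x$.

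The idea you are missing is to superpose the two configurations rather than to dominate one by the other: work with $\1_{V_n}+k\delta_x$. Its \emph{legal} stabilization leaves a number of particles distributed exactly as $S_n$, by the argument of Lemma~\ref{lemma-sampling}: first walk the $k$ extra particles out of $V_n$ on top of the still-active carpet (these topplings are legal because every visited site then carries at least two particles, hence an active one), and then stabilize the remaining carpet $\1_{V_n}$ with the leftover, still i.i.d., instructions. An \emph{acceptable} stabilization of the same configuration instead forces the $n$ carpet particles out first and then stabilizes the $k$ particles at $x$, which all remain inside $V_n$ with probability $\PP(x+A_k\subset V_n)$ (the shifted array being again distributed as $\tau$). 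Lemma~\ref{lemma-monotonicity} guarantees that the acceptable procedure ejects at least as many particles as the legal one, and the claimed bound follows. This superposed configuration is what allows both of your intended comparisons to pass through a single application of the monotonicity lemma.
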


\begin{proof}
Let~$n,\,k\geq 1$ and~$x\in V_n$, and consider the initial
configuration~$\eta=\1_{V_n}+k\delta_x$, with~$k+1$ active particles
at~$x$ and one active particle at each other site of~$V_n$.

On the one hand, if we first move the~$k$ particles from~$x$ out of~$V_n$,
on top of the other~$n$ particles which we do not move, and then 
let this remaining carpet of one active particle per site stabilize,
then the number of remaining particles is distributed as~$S_n$.

On the other hand, if we start by forcing the~$n$ particles of the
carpet to move out of~$V_n$, using acceptable topplings, and then let
the~$k$ particles at~$x$ stabilize, then with
probability~$\PP(x+A_k\subset V_n)$, these~$k$ particles all remain
inside~$V_n$.

Note that in the first scenario we only used legal topplings, while in the
second scenario we used acceptable topplings.
Thus, the
number of particles remaining inside~$V_n$ being a non-increasing function of
the odometer, the least action principle given by Lemma~\ref{lemma-monotonicity} allows us to conclude.
\end{proof}

With this result in hand we now turn to the proof of the inner
bound.

\begin{proof}[Proof of Theorem~\ref{thm-ps}, inner bound]
Let~$\rho>\rc$, and let us show that~$\PP(k/|A_k|\geq \rho)\to 0$
as~$k\to\infty$.
For every~$k,\,n\geq 1$, we can write
\begin{equation}
\label{entropic-pb}
\big\{|A_k|\leq n\big\}
\ =\ 
\bigcup_{x\in V_n}
\big\{x+A_k\subset V_n\big\}
\,.
\end{equation}
At this point, we could perform a union bound and use
Lemma~\ref{lemma-inner} to deduce that~$\PP(|A_k|\leq n)\leq
n\,\PP(S_n\geq k)$, and take~$n=n_k=\lfloor k/\rho\rfloor$.
But then, we only know from Theorem~\ref{thm-dd} that~$\PP(S_{n_k}\geq
k)$ tends to~$0$ as~$k\to\infty$, but we do not know at which speed,
so we cannot beat the entropic factor~$n$ in front of the probability.

To bypass this issue, we take some margin in order to be able to
reduce the number of terms.
We fix an intermediate density~$\rho'\in(\rc,\,\rho)$.
Then, for every~$k,\,n,\,m\geq 1$,
taking~$y=\lceil x/m\rceil$ in~\eqref{entropic-pb} we get
\[
\big\{|A_k|\leq n\big\}
\
\ \subset\ 
\bigcup_{y=1}^{\lceil n/m\rceil}
\big\{ym+A_k\subset V_{n+m}\big\}
\,.
\]
Combining this with Lemma~\ref{lemma-inner}, we deduce that
\[
\PP\big(|A_k|\leq n\big)
\ \leq\ 
\sum_{y=1}^{\lceil n/m\rceil}
\PP\big(ym+A_k\subset V_{n+m}\big)
\ \leq\ 
\Big\lceil\frac n m\Big\rceil
\,\PP(S_{n+m}\geq k)
\,.
\]
Taking now~$n=n_k=\lfloor k/\rho\rfloor$ and~$m=m_k=\lfloor
n_k(\rho-\rho')/\rho'\rfloor$ we can write
\[
\PP\bigg(\frac k {|A_k|} \geq\rho\bigg)
\ =\ 
\PP\big(|A_k|\leq n_k\big)
\ \leq\ 
\Big\lceil\frac {n_k}{m_k}\Big\rceil
\,\PP\bigg(\frac{S_{n_k+m_k}}{n_k+m_k}\geq\rho'\bigg)
\,,
\]
using that~$k\geq n_k\rho\geq (n_k+m_k)\rho'$.
When~$k\to\infty$ we have~$n_k+m_k\to\infty$, so that the last probability
above tends to~$0$, following the convergence in
probability~$S_n/n\to\rc$ established in Theorem~\ref{thm-dd}.
Besides, we have~$n_k/m_k=O(1)$ when~$k\to\infty$, so we eventually
deduce that~$\PP(k/|A_k|\geq \rho)\to 0$ when~$k\to\infty$, which is
the desired inner bound.
\end{proof}

\subsection{Outer bound}
\label{sec-outer}

Let~$\rho<\rc$.
As for the outer bound in~\cite{LS21}, we proceed in two steps: first
we make the particles spread (using acceptable topplings) to obtain a
subcritical particle density, and then we let the particles stabilize,
using that at subcritical densities few particles jump out of a
segment.
Thanks to Lemma~\ref{lemma-NML} this implies that, with high
probability, no particle jumps out of a slightly enlarged segment.

Let~$\rho',\,\rho''$ be such that~$\rho<\rho'<\rho''<\rc$.
Let~$k\geq 1$.
Let~$\eta:\Z\to\{0,1\}$ be an i.i.d.\ Bernoulli initial configuration
with parameter~$\rho''$.
We see the sites~$x$ such that~$\eta(x)=1$ as holes, which can be
filled by one particle.

We start with~$k$ active particles at~$0$ and no other particles elsewhere.
During the first step, we force each of these~$k$ particles to walk,
with acceptable topplings, until it finds an unoccupied hole.
At the end of this first step, we end up with the~$k$ particles placed
in~$k$ consecutive holes.
Let~$I$ be the set of sites visited during this first step.
Note that this set~$I$ contains exactly~$k$ holes, i.e., we always
have~$\|\eta\|_I=k$, and the configuration after step~1 is
simply~$\eta\1_I$.

Then, in the second step we simply leave these~$k$ particles stabilize
in~$\Z$ with legal topplings.
Let~$B_k$ be the set of sites of~$\Z$ which are visited at least once during
these two steps.
The least action principle indicated by
Lemma~\ref{lemma-monotonicity} entails that~$A_k\subset B_k$.
Therefore, we aim at showing the outer bound for~$B_k$ instead
of~$A_k$, i.e., we show that~$\PP(k/|B_k|\leq\rho)\to 0$.

First, taking~$n=\lceil k/\rho'\rceil$, we can write
\[
\big\{|I|\geq n\big\}
\ =\ 
\bigcup_{x\in V_n}\big\{I\supset V_n-x\big\}
\ \subset\ 
\bigcup_{x\in V_n}
\big\{
k=\|\eta\|_I\geq\|\eta\|_{V_n-x}
\big\}
\,,
\]
so that
\[
\PP\big(|I|\geq n\big)
\ \leq\ 
\sum_{x\in V_n}\PP\big(\|\eta\|_{V_n-x}\leq k\big)
\ =\ 
n\,\PP\big(\|\eta\|_{V_n}\leq k\big)
\ =\ 
n\,\PP\bigg(\frac{\eta(1)+\cdots+\eta(n)}{n}\leq\rho'\bigg)
\,,
\]
which tends to~$0$ when~$n\to\infty$ by the law of large numbers,
since~$\rho'<\rho''=\E[\eta(1)]$.

Thus, to get the desired outer bound, there only remains to prove
that~$\PP(\mathcal{E}_k)\to 0$ as~$k\to\infty$,
where
\[
\mathcal{E}_k
\ =\ 
\bigg\{|B_k|\geq \frac k{\rho},\,|I|<n_k\bigg\}
\qquad
\text{with}
\quad
n_k
\ =\ 
\bigg\lceil\frac k{\rho'}\bigg\rceil
\,.
\]
Now, we would like to divide this event according to the position of
the set~$I$, which has to be included into a segment of length~$n_k$
containing~$0$.
But the number of such segments is~$n_k$, which would introduce an
entropic factor that we would not be able to deal with, because in the
end we will use the result of Lemma~\ref{lemma-RT} which says that
below criticality the probability that a positive fraction exits tends
to~$0$, but does not tell us at which speed.

Hence, as we did above for the inner bound, we circumvent this problem
by taking some margin to reduce the number of events in the union.
For~$n=n_k$ and~$m\geq 1$ we write
\begin{equation}
\label{union-Ek}
\mathcal{E}_k
\ \subset\ 
\bigcup_{x\in V_n}
\bigg\{|B_k|\geq\frac k\rho\,,\ I\subset V_n-x\bigg\}
\ \subset\ 
\bigcup_{y=1}^{\lceil n/m\rceil}
\mathcal{F}_{k,y}
\,,
\end{equation}
where the events~$\mathcal{F}_{k,y}$ are defined by
\[
\mathcal{F}_{k,y}
\ =\ 
\bigg\{|B_k|\geq\frac k\rho\,,\ I\subset V_{n+m}-ym\bigg\}
\,.
\]
We now choose~$\alpha,\,\beta>0$ such that
\[
\frac 1{\rho'}+\alpha+4\beta
\ <\ \frac 1\rho
\]
and we define~$m=m_k=\lceil \alpha k\rceil$ and~$j=j_k=\lceil \beta
k\rceil$ for each~$k\geq 1$, so that~$n+m+4j<k/\rho$ for~$k$ large enough.
Let~$k$ be large enough so that this holds, and let~$y\leq\lceil
n/m\rceil$ and~$J_y=V_{n+m}-ym$.
Defining~\smash{$K_y=\{1-ym-2j,\,\ldots,\,n+m-ym+2j\}$} we
have~$|K_y|=n+m+4j<k/\rho$, so that
\[
\mathcal{F}_{k,y}
\ \subset\ 
\big\{B_k\not\subset K_y\,,\ I\subset J_y\big\}
\,.
\]
Now recall the notation~$A(\eta)$ for the set of sites visited during
the stabilization of a configuration~$\eta$ in~$\Z$.
With this notation, we have~$B_k=I\cup A(\eta\1_I)$.
Since~$J_y\subset K_y$, we get
\[
\mathcal{F}_{k,y}
\ \subset\ 
\big\{A(\eta\1_I)\not\subset K_y\,,\ I\subset J_y\big\}
\ \subset\ 
\big\{A(\eta\1_{J_y})\not\subset K_y\big\}
\,.
\]
Thus, we obtain
\[
\PP(\mathcal{F}_{k,y})
\ \leq\ 
\PP\big(A(\eta\1_{J_y})\not\subset K_y\big)
\ =\ 
\PP\big(A(\eta\1_{V_{n+m}})\not\subset K_0\big)
\,.
\]
Using now Lemma~\ref{lemma-NML} we deduce that, for
every~$i\in\N$,
\[
\PP(\mathcal{F}_{k,y})
\ \leq\ 
1-\PP_{\eta}(M_{n+m}\leq i)
\,\PP(G_1+\cdots G_i\leq j)
\,,
\]
where~$G_1,\,\ldots,\,G_i$ are i.i.d.\ Geometric
variables with parameter~$\lambda/(1+\lambda)$.
Plugging this into~\eqref{union-Ek}, we are left with
\[
\PP(\mathcal{E}_k)
\ \leq\ 
\Big\lceil\frac n m\Big\rceil\,
\Big(
1-\PP_\eta(M_{n+m}\leq i)
\,\PP(G_1+\cdots G_i\leq j)
\Big)
\,.
\]
Choosing now~$i=i_k=\lfloor\gamma k\rfloor$ with a certain
parameter~$\gamma>0$ such that~$\gamma\lambda/(1+\lambda)<\beta$, we
then have~$\PP(G_1+\cdots G_i\leq j)\to 1$ when~$k\to\infty$ by the
law of large numbers, and~$\PP(M_{n+m}\leq i)\to 1$ by
Lemma~\ref{lemma-RT}.
Since~$n_k/m_k=O(1)$ when~$k\to\infty$, we conclude
that~$\PP(\mathcal{E}_k)\to 0$ as~$k\to\infty$, which completes the
proof of the outer bound.

\bibliography{article.bib}
\end{document}